\newtheorem{theorem}{Theorem}[section]
\newtheorem{cor}{Corollary}[section]
\theoremstyle{definition}
\newtheorem{defn}{Definition}
\theoremstyle{definition}
\theoremstyle{remark}
\theoremstyle{proposition}
\newtheorem{prop}{Proposition}[section]
\begin{document}
\title[Poletsky-Stessin Hardy Spaces in $\mathbb{C}^{n}$]
 {Poletsky-Stessin Hardy Spaces on Complex Ellipsoids in $\mathbb{C}^{n}$ }

\author{S\.{I}bel \c{S}ah\.{I}n }

\address{Faculty of Engineering and Natural Sciences, Sabanc{\i} University}

\email{sahinsibel@sabanciuniv.edu}

\keywords{Hardy Space, Complex ellipsoid, Approach region,
Composition operator}

\date{\today}
\dedicatory{Dedicated to Prof.Dr. Ayd{\i}n Aytuna on the occasion of
his 65th birthday}


\begin{abstract}
We study Poletsky-Stessin Hardy spaces on complex ellipsoids in
$\mathbb{C}^{n}$. Different from one variable case, classical Hardy
spaces are strictly contained in Poletsky-Stessin Hardy spaces on
complex ellipsoids so boundary values are not automatically obtained
in this case. We have showed that functions belonging to
Poletsky-Stessin Hardy spaces have boundary values and they can be
approached through admissible approach regions in the complex
ellipsoid case. Moreover, we have obtained that polynomials are
dense in these spaces. We also considered the composition operators
acting on Poletsky-Stessin Hardy spaces on complex ellipsoids and
gave conditions for their boundedness and compactness.
\end{abstract}

\maketitle
\section*{Introduction}
The aim of this paper is to study the behavior of Hardy spaces
introduced by Poletsky-Stessin in \cite{pol} in the case of complex
ellipsoids, $\mathbb{B}^{\textbf{p}}$. Unlike the one variable case,
for $n>1$ Poletsky-Stessin Hardy spaces on complex ellipsoids
strictly contain the classical Hardy spaces
$H^{p}(\mathbb{B}^{\textbf{p}})$. Hence, in this case we do not
inherit the existence of boundary values from the classical theory.
In this paper, we show the existence of boundary values through
admissible approach regions. Moreover, we obtain a polynomial
approximation as in the classical Hardy spaces and we also consider
the boundedness and compactness properties of composition operators
acting on Poletsky-Stessin Hardy spaces of complex ellipsoid.\\ The
organization of this paper is as follows: In Section 1, we recall
the classical Hardy spaces given in \cite{stein} and review the
construction of the Poletsky-Stessin Hardy spaces
$H^{p}_{\phi}(\Omega)$, for a hyperconvex domain $\Omega$ and a
continuous, negative, plurisubharmonic exhaustion function $\phi$.
The main results of this study are given in the following sections,
in Section 2 we first examine the existence of radial limit values
for Poletsky-Stessin Hardy spaces
$H^{p}_{u}(\mathbb{B}^{\textbf{p}})$. In addition, we will give a
discussion of comparison between classical Hardy spaces and
Poletsky-Stessin Hardy classes. Then, we consider a generalization
of a method given by Stein and using this rather general method in
the case of complex ellipsoid with Cauchy-Fantappie kernel, we show
the existence of boundary values through admissible approach
regions. Moreover, we will give a brief discussion about the
relation between the admissible approach regions and Kobayashi
approach regions given by the invariant Kobayashi-Royden metric. In
this section we also show that polynomials are dense in the
Poletsky-Stessin Hardy spaces $H^{p}_{u}(\mathbb{B}^{\textbf{p}})$.
Finally, in Section 3 we consider the composition operators, with
holomorphic symbols, acting on $H^{p}_{u}(\mathbb{B}^{\textbf{p}})$
and give the necessary and sufficient conditions for the boundedness
and compactness of these operators.
\section {Poletsky-Stessin Hardy Spaces on Complex Ellipsoids}
In this section we will give the preliminary definitions and some
important results that we will use throughout this paper. Before
proceeding with Poletsky-Stessin Hardy spaces let us first recall
the classical Hardy spaces given by \cite{stein}. Let $\Omega$ be a
smoothly bounded, hyperconvex domain in $\mathbb{C}^{n}$ and
$\lambda$ be a characterizing function for $\Omega$ which is defined
in a neighborhood of $\overline{\Omega}$ i.e. $\lambda$ is smooth ,
$\lambda(x)<0$ if and only if $x\in \Omega$, $\partial
\Omega=\{\lambda(x)=0\}$ and $|\nabla\lambda(x)|>0$ if $x\in\partial
\Omega$. (The last condition is equivalent to
$\frac{\partial\lambda}{\partial\nu_{x}}>0$ where $\nu_{x}$ is the
outward normal at x.). Let $\Omega_{r}=\{z:\lambda(z)<r:r<0\}$ and
$\partial \Omega_{r}=\{z:\lambda(z)=r\}$.\\In \cite{stein}, E.M.
Stein defines the classical Hardy spaces $H^{p}(\Omega)$ as:
\begin{equation*}
H^{p}(\Omega)\doteq\{f|\quad f\quad\textit{holomorphic in}\quad
\Omega,\quad \sup_{r<0}\int_{\partial
\Omega_{r}}|f|^{p}d\sigma_{r}<\infty \}
\end{equation*} where $d\sigma_{r}$ is the surface area
measure induced by the characterizing function $\lambda$ on
$\partial \Omega_{r}$. This space is equipped with the norm
\begin{equation*}
\|f\|_{p}^{p}=\sup_{r<0}\int_{\partial
\Omega_{r}}|f|^{p}d\sigma_{r}.
\end{equation*}
The space $H^{p}(\Omega)$ does not depend on the characterizing
function $\lambda$ used to define $\Omega$ and one gets equivalent
norms for different characterizing functions. In \cite{pol},
Poletsky \& Stessin introduced new Hardy type classes of holomorphic
functions on hyperconvex domains in $\mathbb{C}^{n}$. Before
defining these new classes let us first give some preliminary
definitions. Let $\varphi:\Omega\rightarrow [-\infty,0)$ be a
negative, continuous, plurisubharmonic exhaustion function for
$\Omega$. Following \cite{de1} we define the pseudoball:
\begin{equation*}\label{eq:pseudoball}
B(r)=\{z\in\Omega:\varphi(z)<r\}\quad ,\quad r\in[-\infty,0),
\end{equation*}
and pseudosphere:
\begin{equation*}\label{eq:pseudosphere}
S(r)=\{z\in\Omega:\varphi(z)=r\}\quad ,\quad  r\in[-\infty,0),
\end{equation*}
and set
\begin{equation*}
\varphi_{r}(z)= \max\{\varphi(z),r\}\quad ,\quad r\in(-\infty,0).
\end{equation*} \\ In \cite{de1}, Demailly introduced the Monge-Amp\`ere
measures in the sense of currents as :
\begin{equation*}\label{eq:mameasure}
\mu_{\varphi,r}=(dd^{c}\varphi_{r})^{n}-\chi_{\Omega\setminus
B(r)}(dd^{c}\varphi)^{n}\quad r\in(-\infty,0).
\end{equation*}\\It is clear from the definition that these measures are supported on
$S(r)$. Demailly in \cite{de2}, proved the so-called Lelong-Jensen
Formula which we use throughout the sequel. Lelong-Jensen Formula is
stated as follows:
\begin{theorem}
Let $r<0$ and $\phi$ be a plurisubharmonic function on $\Omega$ then
for any negative, continuous, plurisubharmonic exhaustion function
$u$
\begin{equation}\label{eq:leljen}
\int_{S_{u}(r)}\phi d\mu_{u,r}-\int_{B_{u}(r)}\phi
(dd^{c}u)^{n}=\int_{B_{u}(r)}(r-u)dd^{c}\phi(dd^{c}u)^{n-1}.
\end{equation}
\end{theorem}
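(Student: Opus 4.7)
The plan is to convert the left-hand side of (\ref{eq:leljen}) into a single Monge–Amp\`ere difference, apply Stokes to move $dd^c$ onto the test function $\phi$, and use the geometry of $u_r:=\max(u,r)$ to restrict the resulting integral to $B_u(r)$. The first bookkeeping step is to unpack $\mu_{u,r}$: since $u_r=u$ outside $\overline{B_u(r)}$, one has $(dd^c u_r)^n=(dd^c u)^n$ there, and the very definition of $\mu_{u,r}$ then gives
$$\int_{S_u(r)}\phi\,d\mu_{u,r}-\int_{B_u(r)}\phi\,(dd^cu)^n \;=\; \int_\Omega\phi\,\bigl[(dd^c u_r)^n-(dd^c u)^n\bigr].$$

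Next I would invoke the telescoping identity
$$(dd^c u_r)^n-(dd^c u)^n \;=\; dd^c(u_r-u)\wedge T_r,\qquad T_r:=\sum_{k=0}^{n-1}(dd^c u_r)^k\wedge(dd^c u)^{n-1-k},$$
where $T_r$ is a closed positive $(n-1,n-1)$-current. Since $u$ is an exhaustion of $\Omega$, $u>r$ in a neighborhood of $\partial\Omega$, so $u_r-u=\max(r-u,0)$ is compactly supported in $\Omega$. Two applications of Stokes' formula (legitimate because $T_r$ is closed and the support is compact) transform the preceding integral into $\int_\Omega(u_r-u)\,dd^c\phi\wedge T_r$. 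To finish, note that $u_r-u$ vanishes on $\Omega\setminus\overline{B_u(r)}$, while on the open set $B_u(r)$ the function $u_r\equiv r$ is constant, so $dd^cu_r=0$ there and every term of $T_r$ with a positive power of $dd^c u_r$ drops out, leaving $T_r\big|_{B_u(r)}=(dd^c u)^{n-1}$; substituting $u_r-u=r-u$ on $B_u(r)$ yields the right-hand side of (\ref{eq:leljen}).

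The main obstacle is making the telescoping identity and the integration by parts rigorous when $u$ (and hence $u_r$) is only bounded continuous psh rather than smooth, so that the wedge products above are defined only in the Bedford–Taylor sense. This is handled by Demailly's continuity theorem for the Monge–Amp\`ere operator along decreasing sequences of bounded continuous psh functions: approximate $u$ by smooth psh $u_\varepsilon\searrow u$, verify (\ref{eq:leljen}) for the smooth approximants by the classical calculus of differential forms (where all the identities above are routine), and pass to the weak limit using continuity of the mixed Monge–Amp\`ere currents. The compact support of $u_r-u$ in $\Omega$ is what keeps every pairing well-defined in the limit, including the boundary integration by parts.
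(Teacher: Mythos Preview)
The paper does not supply its own proof of this theorem; it merely records the Lelong--Jensen formula and attributes it to Demailly \cite{de2}. Your sketch is precisely the standard Demailly argument: unpack $\mu_{u,r}$ to rewrite the left side as $\int_\Omega\phi\bigl[(dd^cu_r)^n-(dd^cu)^n\bigr]$, telescope the difference, integrate by parts using the compact support of $u_r-u$, and simplify on $B_u(r)$ where $u_r\equiv r$. So there is nothing to compare---your approach \emph{is} the one the paper is citing.

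One minor remark on the regularization step: the more delicate approximation is not for $u$ (Bedford--Taylor integration by parts already handles locally bounded continuous psh functions and their differences, which covers $u$, $u_r$, and $u_r-u$), but for $\phi$, which is merely psh and may take the value $-\infty$. The customary device is to replace $\phi$ by $\phi_j=\max(\phi,-j)$, prove the identity for the bounded psh $\phi_j$, and pass to the limit $j\to\infty$ by monotone convergence on each side. With that adjustment your outline is complete.
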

The next theorem gives us the comparison between Poletsky-Stessin
Hardy spaces and the classical Hardy spaces:
\begin{theorem}
Suppose that $\Omega$ is a smoothly bounded, hyperconvex domain with
a plurisubharmonic characterizing function $\rho$. Then
$H^{p}(\Omega)\subseteq H^{p}_{\rho}(\Omega)$, $1\leq p<\infty$.
\end{theorem}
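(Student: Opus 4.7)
The natural approach is to feed $\phi = |f|^{p}$, which is plurisubharmonic since $f$ is holomorphic, and $u = \rho$ into the Lelong-Jensen formula (Theorem 1.1). For every $r \in (-\infty, 0)$ this produces the decomposition
\begin{equation*}
\int_{S(r)} |f|^{p}\, d\mu_{\rho, r} = \int_{B(r)} |f|^{p} (dd^{c} \rho)^{n} + \int_{B(r)} (r - \rho)\, dd^{c} |f|^{p} \wedge (dd^{c} \rho)^{n-1}.
\end{equation*}
Both summands on the right are nonnegative, because $r - \rho \geq 0$ on $B(r)$ and $dd^{c}|f|^{p}$, $(dd^{c} \rho)^{n-1}$ are positive currents. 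Moreover both grow monotonically as $r \uparrow 0$ (the regions $B(r)$ expand and the weight $r - \rho$ increases), so
\begin{equation*}
\|f\|_{H^{p}_{\rho}}^{p} = \sup_{r<0} \int_{S(r)} |f|^{p}\, d\mu_{\rho, r} = \int_{\Omega} |f|^{p} (dd^{c} \rho)^{n} + \int_{\Omega} (-\rho)\, dd^{c} |f|^{p} \wedge (dd^{c} \rho)^{n-1}.
\end{equation*}
The problem thus reduces to bounding each of these two integrals by a constant times $\|f\|^{p}_{H^{p}(\Omega)}$.

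For the first integral, since $\rho$ is smooth in a neighborhood of $\overline{\Omega}$, the measure $(dd^{c} \rho)^{n}$ has a bounded density with respect to Lebesgue measure, so it suffices to show $\int_{\Omega} |f|^{p}\, dV \leq C \|f\|_{H^{p}}^{p}$. This is classical: away from $\partial \Omega$, Cauchy's integral formula gives pointwise bounds $|f(z)|^{p} \leq C_{K}\|f\|_{H^{p}}^{p}$ on compact sets, while near the boundary (where $|\nabla \rho| > 0$) the coarea formula slices $dV$ as $d\sigma_{r} \wedge dr / |\nabla \rho|$ and one integrates against the Stein-type supremum defining the classical $H^{p}$-norm.

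For the second integral, the plan is to integrate by parts using Stokes' theorem, exploiting that the weight $-\rho$ vanishes on $\partial \Omega$, so that boundary contributions reduce to integrals against $d\sigma_{0}$. Two iterations move the $dd^{c}$ off of $|f|^{p}$ and rewrite the integral as a combination of a bulk term $\int_{\Omega} |f|^{p}(dd^{c}\rho)^{n}$ (already controlled in the previous step) plus a boundary integral bounded by $\sup_{r<0}\int_{\partial \Omega_{r}}|f|^{p}\, d\sigma_{r} = \|f\|^{p}_{H^{p}}$.

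The main technical obstacle is rigorously justifying the integration by parts in the last step, because $|f|^{p}$ fails to be smooth at zeros of $f$ and $dd^{c}|f|^{p}$ is only a positive current rather than a smooth form. The standard remedy is to regularize $|f|^{p}$ by $(|f|^{2} + \varepsilon)^{p/2}$, which is smooth and plurisubharmonic; carry out the Stokes computation in this smooth setting; and then pass to the limit $\varepsilon \to 0^{+}$ by monotone convergence. The uniform positivity of every term in the Lelong-Jensen decomposition guarantees that no cancellation destroys this limiting procedure, yielding the desired inclusion.
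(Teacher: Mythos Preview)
Your approach is correct in spirit but takes a long detour around what the paper does in one line. The paper's proof never invokes Lelong--Jensen, coarea, or integration by parts. It simply observes that on each level set $S(r)$ the Monge--Amp\`ere measure has the explicit smooth expression $d\mu_{\rho,r}=d^{c}\rho\wedge(dd^{c}\rho)^{n-1}\big|_{S(r)}$ (Demailly), while the induced surface measure is $d\sigma_{r}=d^{c}\rho\wedge(dd^{c}|z|^{2})^{n-1}\big|_{S(r)}$. Both are smooth top-degree forms on the same $(2n-1)$-manifold, so $d\mu_{\rho,r}=c(z)\,d\sigma_{r}$ with $c$ smooth; since $\rho$ is smooth in a neighborhood of $\overline{\Omega}$, $c$ is bounded by some $K$ uniformly in $r$, and $\int_{S(r)}|f|^{p}\,d\mu_{\rho,r}\le K\int_{S(r)}|f|^{p}\,d\sigma_{r}$ follows immediately.

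Notice what happens if you carry your scheme through. After Lelong--Jensen and one integration by parts (moving $dd^{c}$ from $|f|^{p}$ to $-\rho$), the two bulk terms $\pm\int |f|^{p}(dd^{c}\rho)^{n}$ cancel, and you are left with exactly the boundary integral $\int_{S(r)}|f|^{p}\,d^{c}\rho\wedge(dd^{c}\rho)^{n-1}$. In other words, your machinery reproves the Demailly identity $d\mu_{\rho,r}=d^{c}\rho\wedge(dd^{c}\rho)^{n-1}|_{S(r)}$ that the paper simply quotes. Your claim that this boundary integral is then ``bounded by $\sup_{r<0}\int_{\partial\Omega_{r}}|f|^{p}\,d\sigma_{r}$'' is the step that actually needs justification, and justifying it is precisely the measure comparison $d^{c}\rho\wedge(dd^{c}\rho)^{n-1}\le K\,d\sigma_{r}$ that constitutes the paper's entire argument. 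So the key content is the same; you have just wrapped it in extra layers. A minor additional point: your regularization $(|f|^{2}+\varepsilon)^{p/2}$ addresses non-smoothness at zeros of $f$, but the more delicate issue in integrating by parts on all of $\Omega$ is the boundary regularity of $f$ itself; working on $B(r)$ for fixed $r<0$ (where $f$ is smooth up to $S(r)$) and only then letting $r\uparrow 0$ avoids this cleanly.
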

\begin{proof}
Since $\rho$ is a smooth function we have the Monge-Amp\`ere measure
$d\mu_{\rho,r}=d^{c}\rho\wedge(dd^{c}\rho^{n-1})|_{S(r)}$
(\cite{de1}, Proposition 3.3) and the surface area measure induced
by $\rho$ is $d\sigma=d^{c}\rho\wedge(dd^{c}|z|^{2})^{n-1}|_{S(r)}$
(\cite{range}, Corollary 3.5). These are both $(2n-1)$-dim
differential forms on the $(2n-1)$-dim manifold so we have
$d\mu_{\rho,r}=c(z)d\sigma(z)$. In a neighborhood of
$\overline{\Omega}$, $\rho$ is smooth and
$\Omega\subset\subset\mathbb{C}^{n}$ so $c(z)$ is a bounded
function. Hence,
\begin{equation*}
\int_{S(r)}\phi d\mu_{\rho,r}=\int_{S(r)}\phi(z)c(z)d\sigma(z)\leq
K\int_{S(r)}\phi(z)d\sigma(z)
\end{equation*}Thus, we have $H^{p}(\Omega)\subseteq
H^{p}_{\rho}(\Omega)$.
\end{proof}

One of the main concerns of this study is to understand the boundary
behavior of Poletsky-Stessin Hardy spaces. For this we also need
boundary measures which were introduced by Demailly in \cite{de2}.
Now let $\varphi:\Omega\rightarrow[-\infty,0)$ be a continuous,
plurisubharmonic exhaustion for $\Omega$ and suppose that the total
Monge-Amp\`ere mass is finite that is, we assume that
\begin{equation}
MA(\varphi)=\int_{\Omega}(dd^{c}\varphi)^{n}<\infty.
\end{equation}
Then as $r$ approaches to 0, $\mu_{\varphi,r}$ converges to a
positive measure $\mu_{\varphi}$ weak*-ly on $\Omega$ with total
mass $\int_{\Omega}(dd^{c}\varphi)^{n}$ and supported on
$\partial\Omega$. This measure $\mu_{\varphi}$ is called the
\textbf{Monge-Amp\`ere
measure on the boundary associated with the exhaustion $\varphi$}.\\
Now we can introduce the Poletsky-Stessin Hardy classes, which will
be our main focus throughout this study. In \cite{pol}, Poletsky \&
Stessin gave the definition of new Hardy spaces using Monge-Amp\'ere
measures as :
\begin{defn}
$H_{\varphi}^{p}(\Omega)$ for $p>0$, is the space of functions
$f\in\mathcal{O}(\Omega)$ such that
\begin{equation*}
\limsup_{r\to 0^{-}}\int_{S_{\varphi,}(r)}|f|^{p}
d\mu_{\varphi,r}<\infty.
\end{equation*}
\end{defn}
The norm on these spaces is given by:
\begin{equation*}
\|f\|_{H_{\varphi}^{p}}=\left(\lim_{r\to
0^{-}}\int_{S_{\varphi}(r)}|f|^{p}
d\mu_{\varphi,r}\right)^{\frac{1}{p}}
\end{equation*}and with respect to these norm the spaces
$H_{\varphi}^{p}(\Omega)$ are Banach spaces \cite{pol}.\\
From now on we will focus on Poletsky-Stessin Hardy spaces on the
complex ellipsoids in $\mathbb{C}^{n}$ which are considered as model
cases for domains of finite type. It should be noted that although
complex ellipsoids are convex domains they are not strictly
pseudoconvex since they have Levi flat points at the boundary. The
complex ellipsoid $\mathbb{B}^{\textbf{p}}\in\mathbb{C}^{n}$ is
given as
\begin{equation*}
\mathbb{B}^{\textbf{p}}=\{z\in\mathbb{C}^{n},
\rho(z)=\sum_{j=1}^{n}|z_{j}|^{2p_{j}}-1<0\}
\end{equation*}
where $\textbf{p}=(p_{1}, p_{2},..., p_{n})\in\mathbb{Z}^{n}$. One
can easily see that
$u(z)=\log(|z_{1}|^{2p_{1}}+|z_{2}|^{2p_{2}}+...+|z_{n}|^{2p_{n}})$
is a continuous, plurisubharmonic exhaustion function for
$\mathbb{B}^{\textbf{p}}$ so we can consider the Poletsky-Stessin
Hardy spaces $H^{p}_{u}(\mathbb{B}^{\textbf{p}})$ associated with
this exhaustion function.
\section {Boundary Behavior of Poletsky-Stessin Hardy Spaces on Complex Ellipsoids}

In this section we will show that unlike the one variable case, for
$n>1$ Poletsky-Stessin Hardy spaces
$H^{p}_{u}(\mathbb{B}^{\textbf{p}})$ are not included in the
classical Hardy spaces $H^{p}(\mathbb{B}^{\textbf{p}})$ on complex
ellipsoids. Hence in this case we do not automatically inherit the
existence of boundary values from the theory of classical Hardy
spaces. Now we start with exhibiting the existence of the radial
limits for holomorphic functions in $
H^{p}_{u}(\mathbb{B}^{\textbf{p}})$, $p\geq1$.
\begin{theorem}
Let $f\in H^{p}_{u}(\mathbb{B}^{\textbf{p}})$ be a holomorphic
function. Then the radial limit function
$f^{*}(\xi)=\lim_{\tilde{r}\rightarrow 1}f(\tilde{r}\xi)$,
$\xi\in\partial\mathbb{B}^{\textbf{p}}$ exists $\mu_{u}$-almost
everywhere and $f^{*}\in
L^{p}_{\mu_{u}}(\partial\mathbb{B}^{\textbf{p}})$, $p\geq1$.
\end{theorem}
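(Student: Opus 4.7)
The plan is to reduce to classical one-variable Hardy space theory on the unit disk by passing through the radial slice functions $g_\xi(\lambda):=f(\lambda\xi)$, exploiting the rotational symmetry of the exhaustion $u(z)=\log\sum|z_j|^{2p_j}$. Since $f$ is holomorphic and $p\geq 1$, $|f|^p$ is plurisubharmonic on $\mathbb{B}^{\mathbf{p}}$. Applying the Lelong-Jensen formula (\ref{eq:leljen}) with $\phi=|f|^p$ shows that both terms on its right-hand side are nonnegative and nondecreasing in $r$, so the limsup in the definition of $\|f\|_{H^p_u}$ is actually an increasing limit, and $\int_{S(r)}|f|^p\,d\mu_{u,r}\leq \|f\|^p_{H^p_u}$ for every $r<0$. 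Moreover, for $\xi\in\partial\mathbb{B}^{\mathbf{p}}$ and $|\lambda|<1$ one has $\sum|\lambda\xi_j|^{2p_j}<1$, so $g_\xi$ is holomorphic on the unit disk $\Delta$; since $|g_\xi|^p$ is subharmonic, the circular mean
\begin{equation*}
M(\tilde{r},\xi):=\frac{1}{2\pi}\int_0^{2\pi}|f(\tilde{r}e^{i\theta}\xi)|^p\,d\theta
\end{equation*}
is nondecreasing in $\tilde{r}\in(0,1]$.

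The central technical step is a dilation bound. Since $u$ is invariant under the $S^1$-action $z\mapsto e^{i\theta}z$, the level sets $S(r)$, the measures $\mu_{u,r}$, and their weak-$*$ limit $\mu_u$ are all $S^1$-invariant. Fubini together with this invariance gives $\int M(\tilde{r},z)\,d\mu_{u,r}(z)=\int|f(\tilde{r}z)|^p\,d\mu_{u,r}(z)$; the same identity at $\tilde{r}=1$, combined with the monotonicity $M(\tilde{r},z)\leq M(1,z)$, yields
\begin{equation*}
\int_{S(r)}|f(\tilde{r}z)|^p\,d\mu_{u,r}(z)\;\leq\;\int_{S(r)}|f|^p\,d\mu_{u,r}\;\leq\;\|f\|^p_{H^p_u}.
\end{equation*}
Since $|f(\tilde{r}\cdot)|^p$ is continuous on $\overline{\mathbb{B}^{\mathbf{p}}}$ for $\tilde{r}<1$, the weak-$*$ convergence $\mu_{u,r}\to\mu_u$ then produces
\begin{equation*}
\int_{\partial\mathbb{B}^{\mathbf{p}}}|f(\tilde{r}\xi)|^p\,d\mu_u(\xi)\leq \|f\|^p_{H^p_u}\qquad\text{for every }\tilde{r}<1.
\end{equation*}

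To extract radial limits, apply monotone convergence and the analogous invariance identity on the boundary to obtain
\begin{equation*}
\int_{\partial\mathbb{B}^{\mathbf{p}}}\sup_{\tilde{r}<1}M(\tilde{r},\xi)\,d\mu_u(\xi)=\lim_{\tilde{r}\to 1^-}\int|f(\tilde{r}\xi)|^p\,d\mu_u(\xi)\leq\|f\|^p_{H^p_u},
\end{equation*}
so $g_\xi\in H^p(\Delta)$ for $\mu_u$-a.e.\ $\xi$. For each such $\xi$, the classical Fatou theorem yields the existence of $\lim_{\tilde{r}\to 1^-}f(\tilde{r}e^{i\theta}\xi)$ for Lebesgue-a.e.\ $\theta\in[0,2\pi)$. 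Letting $E\subset\partial\mathbb{B}^{\mathbf{p}}$ denote the exceptional set where $\lim_{\tilde{r}\to 1^-}f(\tilde{r}\eta)$ fails to exist, the statement above says $\{(\xi,\theta):e^{i\theta}\xi\in E\}$ is $(\mu_u\otimes d\theta)$-null, so Fubini gives that $e^{-i\theta}E=\{\xi:e^{i\theta}\xi\in E\}$ is $\mu_u$-null for a.e.\ $\theta$; the $S^1$-invariance of $\mu_u$ then forces $\mu_u(E)=0$. A final application of Fatou's lemma gives $\int|f^*|^p\,d\mu_u\leq \|f\|^p_{H^p_u}$, so $f^*\in L^p_{\mu_u}(\partial\mathbb{B}^{\mathbf{p}})$.

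The main obstacle is the dilation estimate above: subharmonicity of the slice functions must be coupled with the $S^1$-symmetry of $\mu_{u,r}$ in order to bound the integral of $|f(\tilde{r}\cdot)|^p$ by that of $|f|^p$, and the argument relies essentially on the rotational symmetry of the chosen exhaustion. For a generic plurisubharmonic exhaustion the complex ellipsoid enjoys no such scaling invariance (the distinct weights $p_j$ break isotropy), and a different approach would then be required.
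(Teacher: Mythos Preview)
Your argument is correct, and it shares with the paper the basic idea of slicing by complex lines through the origin and reducing to one-variable Hardy theory on the disk. The execution, however, is genuinely different. The paper establishes a disintegration of the Monge--Amp\`ere measure over the manifold $A_{0}$ of complex lines through $0$,
\[
\int_{S_{u}(r)}|f|^{p}\,d\mu_{u,r}=\int_{A_{0}}\Bigl(\int_{S_{g}(r)}|f|^{p}\,d\mu_{g,r}\Bigr)\omega,
\]
relying on a submersion-type Fubini theorem from Demailly; Fatou's lemma then puts $f$ into $H^{p}_{g}(E)$ on $\omega$-almost every slice, and the same disintegration handles the exceptional set. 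You bypass this entirely, using only the diagonal $S^{1}$-invariance of $u$ together with subharmonic monotonicity of circular means and weak-$*$ convergence $\mu_{u,r}\to\mu_{u}$ to obtain the uniform dilation bound $\int_{\partial\mathbb{B}^{\textbf{p}}}|f(\tilde r\,\cdot)|^{p}\,d\mu_{u}\le\|f\|^{p}_{H^{p}_{u}}$; the exceptional set is then killed by a clean $\mu_{u}\otimes d\theta$ Fubini argument and the $S^{1}$-invariance of $\mu_{u}$. Your route is more elementary and self-contained (no disintegration over $A_{0}$ is needed), while the paper's Fubini formula is an investment that pays off elsewhere---it is reused verbatim in the reproducing property of the Cauchy--Fantappi\`e kernel and in the compactness argument for composition operators. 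One small point worth making explicit in your write-up is the measurability of the exceptional set $E$ (e.g.\ via $\limsup$ and $\liminf$ along a countable sequence $\tilde r\nearrow 1$), which you use implicitly when invoking Fubini.
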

\begin{proof}
Let $\mathbb{B}^{\textbf{p}}$ be the complex ellipsoid determined by
the exhaustion function
$u(z)=\log(|z_{1}|^{2p_{1}}+|z_{2}|^{2p_{2}}+...+|z_{n}|^{2p_{n}})$
and let $\xi=(\xi_{1},\xi_{2},..,\xi{n})\in
\partial\mathbb{B}^{\textbf{p}}$, $t\in \mathbb{D}$. Suppose that $E$ is the ellipse which is the intersection of the complex
line joining $0$ to $\xi$ and the ellipsoid
$\mathbb{B}^{\textbf{p}}$. An exhaustion function for $E$ is
$g_{E}(t)=\log(A_{1}|t|^{2p_{1}}+A_{2}|t|^{2p_{2}}+..+A_{n}|t|^{2p_{n}})$
where $A_{i}=|\xi_{i}|^{2p_{i}}$, $1\leq i\leq n$. The
Monge-Amp\`ere measure associated with the exhaustion function $u$
is $d\mu_{u,r}=d^{c}u\wedge dd^{c}u|_{S_{u}(r)}$ and let $A_{0}$ be
the $n-1$-dimensional manifold of complex lines passing through the
point $0\in \mathbb{B}^{\textbf{p}}$ \cite{stout}. Now take $f\in
H^{p}_{u}(\mathbb{B}^{\textbf{p}})$ then
\begin{equation*}
\int_{S_{u}(r)}|f|^{p}d\mu_{u,r}=\int_{S_{u}(r)}|f|^{p}(d^{c}u\wedge
dd^{c}u)=\int_{A_{0}}\left(\int_{l_{z}\cap
S_{u}(r)}|f|^{p}d^{c}u\right)\omega
\end{equation*}where we have the pull-back measure $\pi^{*}\omega=dd^{c}u$ and $\pi:\bar{\mathbb{B}^{\textbf{p}}}\rightarrow
A_{0}$ is the function given by $\pi(z)=[0,z]=l_{z}$ with $l_{z}$
being the line joining $0$ and $z$.\\We can use the above
generalization of Fubini theorem since $\pi$ is a submersion and
$\pi|_{supp(d^{c}u)}$ is proper
(\cite{de3}, pg:17).\\
The measure $d^{c}u$ on $l_{z}\cap S_{u}(r)$ is equal to
$d^{c}g_{E}(t)$ on $S_{g}(r)$ and since it is a smoothly bounded
domain $d^{c}g_{E}(t)$ on $S_{g}(r)=d\mu_{g,r}$ so
\begin{equation*}
\int_{S_{u}(r)}|f|^{p}d\mu_{u,r}=\int_{A_{0}}\left(\int_{S_{g}(r)}|f|^{p}d\mu_{g,r}\right)\omega
\end{equation*} and by Fatou's lemma
$\int_{A_{0}}\left(\liminf_{r\rightarrow0}\int_{S_{g}(r)}|f|^{p}d\mu_{g,r}\right)\omega<\infty$
for $f\in H^{p}_{u}(\mathbb{B}^{\textbf{p}})$. This implies that for
$\omega$-a.e. line
$\lim_{r\rightarrow0}\int_{S_{g}(r)}|f|^{p}d\mu_{g,r}<\infty$  so
$f\in H^{p}_{g}(E)$ and it has radial boundary values
$d\sigma(\simeq d\mu_{g})$ almost everywhere \cite{stein}. Since
$f^{*}$ is the pointwise limit of measurable functions it is
measurable and consider the set
$A=\{\xi\in\partial\mathbb{B}^{\textbf{p}}, f^{*}(\xi)\quad
does\quad not \quad exist\}$, then
\begin{equation*}
\int_{\partial\mathbb{B}^{\textbf{p}}}\chi_{A}d\mu_{u}=\int_{A_{0}}\left(\int_{\partial
E}\chi_{A}(\eta)d\mu_{g}(\eta)\right)\omega.
\end{equation*} Since $f\in H^{1}_{g}(E)$, it has radial limit
values $d\mu_{g}$-a.e.  so the integral inside is $0$ and we have
$\int_{\partial\mathbb{B}^{\textbf{p}}}\chi_{A}d\mu_{u}=0$.
Therefore $f^{*}(\xi)$ exists $\mu_{u}$-a.e. Moreover for an
analytic function $f\in H^{1}_{g}(E)$ we know that the boundary
function $f^{*}\in L^{p}(\partial E)$ so we have
\begin{equation*}
\int_{\partial\mathbb{B}^{\textbf{p}}}|f^{*}|^{p}d\mu_{u}=\int_{A_{0}}\left(\int_{\partial
E}|f^{*}|^{p}d\mu_{g}\right)\omega<\infty
\end{equation*} hence $f^{*}\in
L^{p}_{\mu_{u}}(\partial\mathbb{B}^{\textbf{p}})$.
\end{proof}
Now we have two Hardy type spaces on $\mathbb{B}^{\textbf{p}}$, the
first one is the Poletsky-Stessin Hardy space
$H^{1}_{u}(\mathbb{B}^{\textbf{p}})$ and the other one is
$H^{1}(\mathbb{B}^{\textbf{p}})$ which is defined with respect to
surface area measure in accordance with Stein's definition. We will
now show that these spaces are not equal. In fact in contrast to the
one variable case Poletsky-Stessin Hardy class strictly contains the
classical Hardy space.
\begin{prop}
Let $\mathbb{B}^{\textbf{p}}$ be the complex ellipsoid. Then there
exists an exhaustion function $u$ such that
$H^{1}(\mathbb{B}^{\textbf{p}})\varsubsetneq
H^{1}_{u}(\mathbb{B}^{\textbf{p}})$.
\end{prop}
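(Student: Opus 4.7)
The plan is to exhibit an explicit function in $H^1_u(\mathbb{B}^{\mathbf{p}}) \setminus H^1(\mathbb{B}^{\mathbf{p}})$ by exploiting the Levi-degenerate geometry of $\partial \mathbb{B}^{\mathbf{p}}$. For concreteness I would treat the case $n = 2$, $\mathbf{p} = (1, p)$ with $p \geq 2$, using the natural exhaustion $u(z) = \log(|z_1|^2 + |z_2|^{2p})$. The Levi-flat stratum of the boundary is the circle $L = \{(z_1, 0) : |z_1| = 1\}$, and the test function will blow up at the point $(1, 0) \in L$. The key heuristic is that the Monge--Amp\`ere measure $d\mu_u$ carries a degenerate weight along $L$ that the surface measure $d\sigma$ does not, so that singular functions admissible in $H^1_u$ need not be admissible in the classical $H^1$.

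First I would compute the density of $d\mu_u$ relative to $d\sigma$ near $L$. Write $\phi := |z_1|^2 + |z_2|^{2p}$, so $u = \log \phi$. The identity
\begin{equation*}
dd^c u = \frac{dd^c\phi}{\phi} - \frac{d\phi \wedge d^c\phi}{\phi^2}
\end{equation*}
together with $\phi \equiv 1$ on $\partial \mathbb{B}^{\mathbf{p}}$ and the vanishing $d^c\phi \wedge d^c\phi = 0$ reduces the boundary form to $d\mu_u = d^c\phi \wedge dd^c\phi|_{\partial\mathbb{B}^{\mathbf{p}}}$. Since $\phi_{1\bar 1} = 1$ while $\phi_{2\bar 2} = p^2 |z_2|^{2p-2}$, an explicit expansion of this wedge yields the pointwise comparison $d\mu_u \sim |z_2|^{2(p-1)} d\sigma$ in a neighborhood of $L$; in particular the non-strict inclusion $H^1(\mathbb{B}^{\mathbf{p}}) \subseteq H^1_u(\mathbb{B}^{\mathbf{p}})$ follows at once (and also from Theorem 1.2 applied to $\rho = \phi - 1$).

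Next I would take $f(z) = (1 - z_1)^{-\alpha}$, which is holomorphic on $\mathbb{B}^{\mathbf{p}}$ with its sole boundary singularity at $(1, 0)$. In local real coordinates $z_1 = 1 - s - it$ subject to the boundary relation $2s \approx t^2 + |z_2|^{2p}$, one has $|1 - z_1| \sim \max(|t|, |z_2|^{2p})$. A routine polar integration then yields the critical exponents
\begin{equation*}
f \in H^1(\mathbb{B}^{\mathbf{p}}) \iff \alpha < 1 + \tfrac{1}{p}, \qquad f \in H^1_u(\mathbb{B}^{\mathbf{p}}) \iff \alpha < 2,
\end{equation*}
the extra room in the second threshold coming entirely from the factor $|z_2|^{2(p-1)}$. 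Because $p \geq 2$ forces $1 + 1/p \leq 3/2 < 2$, any $\alpha$ in the open interval $(1 + 1/p, \, 2)$ produces the desired separating function.

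The main technical obstacle is the explicit expansion of $d^c\phi \wedge dd^c\phi$ near the Levi-flat circle: once the correct vanishing order $|z_2|^{2(p-1)}$ is extracted, both integrability estimates collapse to elementary one-dimensional calculations. The construction extends to arbitrary $\mathbf{p}$ with some $p_j > 1$ by repeating the argument at a Levi-flat boundary point, picking up further $|z_k|^{2(p_k - 1)}$ factors from the remaining degenerate directions.
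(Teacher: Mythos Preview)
Your proposal is correct and follows essentially the same approach as the paper: the paper also specializes to $n=2$, $\mathbf{p}=(1,2)$ with $u=\log(|z_1|^2+|z_2|^4)$, records the comparison $d\mu_u\asymp |\xi_2|^2\,d\sigma$, and separates the two spaces with a power of $1-z_1^2$ singular at the Levi-flat point $(1,0)$. Your version is a mild generalization (arbitrary $p\ge 2$, the simpler test function $(1-z_1)^{-\alpha}$, and explicit critical exponents $1+1/p$ versus $2$), but the mechanism---the extra $|z_2|^{2(p-1)}$ weight in $d\mu_u$ absorbing a stronger boundary singularity---is identical.
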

\begin{proof}
We will explicitly construct the exhaustion function $u$ by taking
$n=2$ and $\textbf{p}=(1,2)$. First of all the relation between
$d\sigma$ and $d\mu_{u}$ on $\partial\mathbb{B}^{2}$ is given by
$K_{1}|\xi_{2}|^{2}d\sigma\leq d\mu_{u}\leq
K_{2}|\xi_{2}|^{2}d\sigma$ for some $K_{1},K_{2}>0$ (depending only
on dimension and $p=(1,2)$), now consider the analytic function
$f(z_{1},z_{2})=\displaystyle{\frac{1}{(1-z_{1}^{2})^{2\alpha}}}$
where $\frac{2}{16}<\alpha<\frac{4}{16}$. We have
\begin{equation*}
\int_{\partial\mathbb{B}^{2}}|f^{*}||\xi_{2}|^{2}d\sigma=\int_{|\xi_{2}|^{4}<1}\left(\int_{|\xi_{1}|=\sqrt{1-|\xi_{2}|^{4}}}|f^{*}|d\xi_{1}\right)|\xi_{2}|^{2}d\xi_{2}
\end{equation*}
\begin{equation*}
=\int_{|\xi_{2}|^{4}<1}\left(\int_{0}^{2\pi}\frac{1}{|1-(\sqrt{1-|\xi_{2}|^{4}}e^{i\theta})^{2}|^{2\alpha}}d\theta\right)|\xi_{2}|^{2}d\xi_{2}
\end{equation*}
\begin{equation*}
=\int_{|\xi_{2}|^{4}<1}\left(\int_{0}^{2\pi}\frac{1}{|1-e^{2i\theta}+|\xi_{2}|^{4}e^{2i\theta}|^{2\alpha}}d\theta\right)|\xi_{2}|^{2}d\xi_{2}
\end{equation*}Now we will consider the behavior of the inside
integral near the point $\{1\}$ i.e. as $\theta\rightarrow0$ (this
is the only problematic point as $|\xi_{2}|\rightarrow0$).
\begin{equation*}
\lim_{\theta\rightarrow0}\frac{(1-2(1-|\xi_{2}|^{4})\cos2\theta+(1-|\xi_{2}|^{4})^{2})^{\alpha}}{|\xi_{2}|^{8\alpha}}=1
\end{equation*}so our integral becomes for $t>0$,$\delta>0$
\begin{equation*}
=\int_{|\xi_{2}|^{4}<1}\left(2\int_{t}^{\pi-t}\frac{1}{|1-e^{2i\theta}+|\xi_{2}|^{4}e^{2i\theta}|^{2\alpha}}d\theta\right)|\xi_{2}|^{2}d\xi_{2}+2\int_{B_{\delta}(0)}\frac{2t}{|\xi_{2}|^{8\alpha}}|\xi_{2}|^{2}d\xi_{2}
\end{equation*}
\begin{equation*}
+2\int_{|\xi_{2}|^{4}<1\setminus
B_{\delta}(0)}\frac{2t}{|\xi_{2}|^{8\alpha}}|\xi_{2}|^{2}d\xi_{2}
\end{equation*} since we are away from the singularity first and
third integrals are finite and if we take
$\frac{2}{16}<\alpha<\frac{4}{16}$ then second integral is also
finite and we have $f\in H^{1}_{u}(\mathbb{B}^{2})$ but $f\notin
H^{1}(\mathbb{B}^{2})$ since for this choice of $\alpha$
\begin{equation*}
\int_{|\xi_{2}|^{4}<1}\left(\int_{0}^{2\pi}\frac{1}{|1-e^{2i\theta}+|\xi_{2}|^{4}e^{2i\theta}|^{2\alpha}}d\theta\right)d\xi_{2}
\end{equation*}diverges.
\end{proof}In the previous
results we have shown that for the functions in the Poletsky-Stessin
Hardy class $H^{p}_{u}(\mathbb{B}^{\textbf{p}})$ we have the radial
limit values and throughout the following arguments we will study
the behavior of these boundary values in detail. In the classical
Hardy space theory on strictly pseudoconvex domains, Stein showed
the existence of boundary values along admissible approach regions
that are more general than the radial approach. Throughout the rest
of the section we will show that for the functions in the
Poletsky-Stessin Hardy class $H^{p}_{u}(\mathbb{B}^{\textbf{p}})$
boundary values along admissible approach regions exist. Although we
use the general idea in Stein's classical method, our approach
differs in two aspects, respectively the use of Cauchy-Fantappie
kernel instead of Poisson kernel and the use of radial limits. In
the study of the boundary behavior of holomorphic functions, having
the boundary of the domain as a space of homogenous type seems to be
a leitmotif because one of the most commonly used methods in order
to understand boundary behavior is to use maximal functions
(\cite{stein}, Theorem 3) and the natural setting for this type of
analysis is homogenous spaces. Therefore we will start with
recalling the properties of homogenous spaces and then as an
application of this classical method we will show that polynomials
are dense in the Poletsky-Stessin Hardy spaces
$H^{p}_{u}(\mathbb{B}^{\textbf{p}})$ on complex ellipsoids. Before
proceeding our arguments in $\mathbb{C}^{n}$ with maximal functions,
let us first mention the spaces of homogenous type in
$\mathbb{C}^{n}$ :
\begin{defn}
Suppose that we are given a space $X$ which is equipped with a
quasi-metric $\rho$ (\cite{krantz}, pg:145) and a regular Borel
measure $\mu$ on $X$. Denote the balls in this quasi-metric by
$B(x,r)=\{y\in X:\quad\rho(x,y)<r\}$. We say that $(X,\rho,\mu)$ is
a space of homogenous type if the following conditions are
satisfied:
\begin{itemize}
\item For each $x\in X$ and $r>0$ , $0<\mu(B(x,r))<\infty$
\item (\textbf{Doubling Condition}) There is a constant $C_{2}>0$ such that for any $x\in X$ and
$r>0$ we have $\mu(B(x,2r))\leq C_{2}\mu(B(x,r))$.
\end{itemize}
\end{defn}
Let $\Omega\subset\subset\mathbb{C}^{n}$ be a smoothly bounded
domain such that we have a quasi-metric $\rho$ on
$\overline{\Omega}$ and a regular Borel measure $\mu$ on
$\partial\Omega$. Let
$K(z,\xi):\Omega\times\partial\Omega\rightarrow\mathbb{C}$ be a
kernel such that $K(z,\xi)\in L^{1}(d\mu)$ for $z\in \Omega$,
$\xi\in\partial \Omega$. Let us consider the integral operator
determined by $K(z,\xi)$ for an $L^{p}(d\mu)$ function $f^{*}$,
\begin{equation*}
Kf^{*}(z)=\int_{\partial \Omega}f^{*}(\xi)K(z,\xi)d\mu(\xi)
\end{equation*}and define the associated maximal function as
\begin{equation*}
Mf^{*}(\xi)=\sup_{\varepsilon>0}\frac{1}{\mu(B(\xi,\varepsilon))}\int_{B(\xi,\varepsilon)}|f^{*}|d\mu
\end{equation*}
From the corresponding results in literature (see eg. \cite{stein},
Theorem 2; \cite{zyg}, chapter 14) the fundamental theorem of the
theory of singular operators which is adopted to our setting can be
stated as:
\begin{theorem}
Suppose $f^{*}\in L^{p}(d\mu_{u})$ and $1\leq p\leq\infty$ \\
(a) $\|Mf^{*}\|_{p}\leq A_{p}\|f^{*}\|_{p}$ for $1<p\leq\infty$\\
(b) The mapping $f^{*}\rightarrow Mf^{*}$ is of weak type (1-1) i.e.
$\mu_{u}\{\xi:Mf^{*}(\xi)>\alpha \}\leq
\frac{K}{\alpha}\|f^{*}\|_{1}$ if $f^{*}\in L^{1}(d\mu_{u})$.
\end{theorem}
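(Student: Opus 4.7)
The plan is to deduce both parts from the standard machinery available on spaces of homogeneous type, once we check that $(\partial \mathbb{B}^{\mathbf{p}},\rho,\mu_u)$ really is such a space. Part (a) at $p=\infty$ is immediate, since
\begin{equation*}
Mf^{*}(\xi)\;\le\;\sup_{\varepsilon>0}\frac{1}{\mu_u(B(\xi,\varepsilon))}\int_{B(\xi,\varepsilon)}\|f^{*}\|_{\infty}\,d\mu_u\;=\;\|f^{*}\|_{\infty},
\end{equation*}
so the real content lies in proving the weak-type $(1,1)$ statement (b); the intermediate values $1<p<\infty$ in (a) then follow by Marcinkiewicz interpolation between (b) and the trivial $L^{\infty}$ bound.

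For (b), the main tool I would use is a Vitali-type covering lemma valid in any doubling quasi-metric space. Fix $\alpha>0$ and let $E_{\alpha}=\{\xi\in\partial\mathbb{B}^{\mathbf{p}}:Mf^{*}(\xi)>\alpha\}$. For each $\xi\in E_{\alpha}$ choose, by definition of the supremum, a radius $\varepsilon_{\xi}>0$ with
\begin{equation*}
\int_{B(\xi,\varepsilon_{\xi})}|f^{*}|\,d\mu_u\;>\;\alpha\,\mu_u(B(\xi,\varepsilon_{\xi})).
\end{equation*}
The collection $\{B(\xi,\varepsilon_{\xi})\}_{\xi\in E_{\alpha}}$ is a cover of $E_{\alpha}$ whose radii we may assume bounded (the diameter of $\partial\mathbb{B}^{\mathbf{p}}$ is finite). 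The covering lemma then produces a pairwise disjoint subcollection $\{B(\xi_{j},\varepsilon_{j})\}$ such that $E_{\alpha}\subset\bigcup_{j}B(\xi_{j},c\varepsilon_{j})$ for some absolute constant $c$ depending only on the quasi-metric constant of $\rho$. Iterated application of the doubling condition gives $\mu_u(B(\xi_{j},c\varepsilon_{j}))\le C\,\mu_u(B(\xi_{j},\varepsilon_{j}))$ with $C=C_{2}^{\lceil\log_{2}c\rceil}$, and then
\begin{equation*}
\mu_u(E_{\alpha})\;\le\;\sum_{j}\mu_u\bigl(B(\xi_{j},c\varepsilon_{j})\bigr)\;\le\;\frac{C}{\alpha}\sum_{j}\int_{B(\xi_{j},\varepsilon_{j})}|f^{*}|\,d\mu_u\;\le\;\frac{C}{\alpha}\|f^{*}\|_{1},
\end{equation*}
where the last inequality uses disjointness of the chosen balls. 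This proves (b) with $K=C$.

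With (b) in hand, Marcinkiewicz interpolation gives $\|Mf^{*}\|_{p}\le A_{p}\|f^{*}\|_{p}$ for $1<p<\infty$, completing (a). The single genuine difficulty is the covering lemma: a Vitali-type selection is not automatic in a quasi-metric, because a quasi-triangle constant $\kappa>1$ forces one to enlarge chosen balls by a factor larger than the Euclidean $3$, and one must verify that the ensuing blown-up balls still satisfy a uniform doubling estimate. Once that constant is tracked, everything else is a routine transcription of the classical Hardy–Littlewood argument to the homogeneous-space setting, exactly as in Stein \cite{stein} and in the references already invoked in the paper.
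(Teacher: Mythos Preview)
Your argument is correct and is the standard route to the Hardy--Littlewood maximal inequality on spaces of homogeneous type: the trivial $L^{\infty}$ bound, the Vitali-type covering lemma giving weak $(1,1)$, and Marcinkiewicz interpolation to fill in $1<p<\infty$. The only caveat you raise yourself --- tracking the enlargement factor coming from the quasi-triangle constant and checking that doubling still controls the dilated balls --- is real but routine, and you handle it appropriately.

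However, you should be aware that the paper does not actually \emph{prove} this theorem at all. It is stated as a known result, attributed directly to the literature (Stein \cite{stein}, Theorem~2, and Zygmund \cite{zyg}, Chapter~14), and no argument is given in the text. So there is no ``paper's own proof'' to compare against: the author simply imports the maximal theorem as a black box once the homogeneous-space structure of $(\partial\mathbb{B}^{\mathbf{p}},d,\mu_u)$ has been recorded. Your write-up therefore supplies more detail than the paper does, and in precisely the spirit of the references the paper cites.
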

Now we further suppose that the following conditions are satisfied:
\begin{itemize}
\item $\rho$ is a quasi-metric on $\overline{\Omega}$
\item $(\partial\Omega,\rho,\mu)$ is a space of homogenous type
\item For all $z\in \Omega$, $\xi\in \partial \Omega$ with
$\eta=\rho(z,\xi)>0$ we have
\begin{equation*}
|K(z,\xi)|\leq C\frac{1}{\mu(B(\xi,\eta))}
\end{equation*}for some $C$ independent of $\xi$ and
$\eta$ and dependence of $C$ to the point $z$ is given as in
(\cite{thomas}, (3.3)). Such a kernel is called a standard kernel.
\end{itemize}
Following the method given in (\cite{stein},Theorem 3),which was
applied for the Poisson integrals of $L^{p}$ functions, we can now
estimate the integral operator given above in this general setting :
\begin{theorem}
Suppose $Kf^{*}(z)$ is the $K(z,\xi)$-integral of an $L^{p}(d\mu)$
function $f^{*}$ where $K(z,\xi)$ satisfies the conditions given
above. Let $Q_{\alpha}(y)=\{z\in\overline{\Omega},
\rho(y,z)<\alpha\delta_{y}(z)\}$ for $y\in\partial\Omega$, $z\in
\Omega$ with $\delta_{y}(z)=\min\{\rho(z,\partial
\Omega),\rho(z,T_{y})\}$ ($T_{y}$ is the tangent plane at $y$),
$\alpha>0$, be the admissible approach region. Then
\begin{itemize}
\item When $\rho(y,z)=\varepsilon$ and $z\in Q_{\alpha}(y)$ the
following inequality holds
\begin{equation*}
|Kf^{*}(z)|\leq\tilde{A}\sum_{k=1}^{\infty}(\mu(B(y,2^{k}\varepsilon)))^{-1}\int_{B(y,2^{k}\varepsilon)}|f^{*}|d\mu
\end{equation*}
\item $\sup_{z\in Q_{\alpha}(y)}|Kf^{*}(z)|\leq\tilde{A}Mf^{*}(y)$.
\end{itemize}
\end{theorem}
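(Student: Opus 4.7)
My plan is to follow Stein's dyadic-decomposition argument (\cite{stein}, Theorem 3), adapted here to the abstract setting of a standard kernel on a homogenous-type boundary. Fix $y\in\partial\Omega$ and $z\in Q_{\alpha}(y)$ with $\rho(y,z)=\varepsilon$. I would split $\partial\Omega$ into the inner ball $B(y,c\varepsilon)$, where $c$ depends on the quasi-triangle constant and on $\alpha$, together with the dyadic annuli $R_{k}=B(y,2^{k}\varepsilon)\setminus B(y,2^{k-1}\varepsilon)$ for $k\geq 1$, and write
\[
Kf^{*}(z)=\int_{B(y,c\varepsilon)}f^{*}(\xi)K(z,\xi)\,d\mu(\xi)+\sum_{k\geq 1}\int_{R_{k}}f^{*}(\xi)K(z,\xi)\,d\mu(\xi).
\]
Each piece is then attacked by a uniform pointwise bound on the kernel over the corresponding shell.

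The first substantive step is a uniform lower bound on $\rho(z,\xi)$ on each shell. For $\xi\in R_{k}$ with $k$ large, the quasi-triangle inequality applied to $\rho(y,\xi)\geq 2^{k-1}\varepsilon$ and $\rho(y,z)=\varepsilon$ produces $\rho(z,\xi)\gtrsim 2^{k}\varepsilon$; the remaining small-$k$ cases and the inner ball are handled using the admissibility condition $\rho(y,z)<\alpha\delta_{y}(z)$, which forces $\rho(z,\partial\Omega)>\varepsilon/\alpha$ and hence $\rho(z,\xi)\gtrsim\varepsilon$ for every $\xi\in\partial\Omega$. Substituting into the standard-kernel hypothesis gives $|K(z,\xi)|\lesssim 1/\mu(B(\xi,2^{k}\varepsilon))$ on $R_{k}$; a single use of the doubling condition, together with $\xi\in B(y,2^{k}\varepsilon)$, replaces the center $\xi$ by $y$ and renders the bound independent of $\xi$. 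Extracting this constant from the integral and enlarging $R_{k}$ to $B(y,2^{k}\varepsilon)$ yields the first stated inequality, after absorbing the $B(y,c\varepsilon)$-contribution into the $k=1$ term.

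For the maximal-function bound, every dyadic summand is dominated by $Mf^{*}(y)$ directly from the definition of $M$. To convert the sum into a finite multiple of $Mf^{*}(y)$ rather than $+\infty\cdot Mf^{*}(y)$, I would invoke the refined standard-kernel estimate from (\cite{thomas}, (3.3)): its $z$-dependent factor contributes an extra geometric weight $2^{-\beta k}$ on the $k$-th shell, because $z$ sits at quasi-distance $\varepsilon$ from $y$ while $\xi\in R_{k}$ is at quasi-distance of order $2^{k}\varepsilon$. The resulting geometric series is then dominated by a fixed multiple of $Mf^{*}(y)$, which is the second inequality.

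The main obstacle I anticipate is the small-$k$ and inner-ball regime, where the quasi-triangle inequality by itself gives no useful lower bound on $\rho(z,\xi)$ and all of the control must come from admissibility. There one has to exploit both components of $\delta_{y}(z)=\min\{\rho(z,\partial\Omega),\rho(z,T_{y})\}$: the boundary-distance piece rules out singularities arising when $\xi$ is normal-close to $z$, and the tangent-plane piece rules out those arising when $\xi$ is tangential-close. Once this base case is settled, the rest of the argument is a routine iteration of the doubling condition together with the geometric decay noted above.
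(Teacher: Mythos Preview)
Your proposal is correct and follows essentially the same route as the paper: dyadic decomposition of $\partial\Omega$ into an inner ball and annuli centered at $y$, quasi-triangle lower bounds for $\rho(z,\xi)$ on the outer shells, admissibility to control the inner ball, and doubling to recentre the kernel bound from $\xi$ to $y$. You are in fact more explicit than the paper about the origin of the geometric weight $2^{-\beta k}$ (the refined $z$-dependence of the standard-kernel estimate from \cite{thomas}), which is precisely what is needed to pass from the first displayed inequality to the maximal-function bound.
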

\begin{proof}
Let $Kf^{*}(z)$ be the $K(z,\xi)$-integral of the $L^{p}(d\mu)$
function $f^{*}$,
\begin{equation*}
|Kf^{*}(z)|\leq \int_{\partial \Omega}|f^{*}||K(z,\xi)|d\mu(\xi)
\end{equation*}
\begin{equation*}
=\int_{\rho(\xi,y)<2\varepsilon}|f^{*}||K(z,\xi)|d\mu(\xi)+\sum_{k=2}^{\infty}\int_{2^{k-1}\varepsilon\leq\rho(\xi,y)<2^{k}\varepsilon}|f^{*}||K(z,\xi)|d\mu(\xi)
\end{equation*}first,
\begin{equation*}
\int_{\rho(\xi,y)<2\varepsilon}|f^{*}||K(z,\xi)|d\mu(\xi)\leq\frac{C}{\mu(B(y,2\varepsilon))}\int_{B(y,2\varepsilon)}|f^{*}(\xi)|d\mu(\xi)
\end{equation*}by the condition on the kernel and the construction of approach region. Similarly since
$\rho$ is a pseudometric we have $\rho(z,\xi)\geq
\tilde{C}(\rho(\xi,y)-\rho(y,z))\geq
\tilde{C}2^{k-1}\varepsilon-\tilde{C}\varepsilon\geq\tilde{\tilde{C}}2^{k-2}\varepsilon$
if $k\geq2$ whenever
$2^{k-1}\varepsilon\leq\rho(\xi,y)<2^{k}\varepsilon$ and
$\rho(z,y)=\varepsilon$, so
$|K(z,\xi)|\leq\displaystyle{\frac{2^{2k}\tilde{\tilde{C}}}{\mu(B(y,2^{k}\varepsilon))}}$.
Hence for all $k$,
\begin{equation*}
\int_{2^{k-1}\varepsilon<\rho(\xi,y)<2^{k}\varepsilon}|f^{*}||K(z,\xi)|d\mu(\xi)\leq\frac{\acute{A}_{\alpha,n}}{2^{k}\mu(B(y,2^{k}\varepsilon))}\int_{B(y,2^{k}\varepsilon)}|f^{*}(\xi)|d\mu(\xi).
\end{equation*}Upon summing in $k$ we get the first assertion and the
second inequality is an immediate consequence of the first.
\end{proof}
In \cite{thomas}, Hansson considered the boundedness of
Cauchy-Fantappie integral operator ,$H$, from
$L^{2}_{u}(\partial\mathbb{B}^{\textbf{p}})$ into
$H^{2}_{u}(\mathbb{B}^{\textbf{p}})$. In his work he applied an
operator theory result known as $T1$-Theorem and in order to use
that result he showed the homogeneity of the boundary of the complex
ellipsoid with respect to the quasi-metric $d$ and the boundary
measure $\partial\rho\wedge(\overline{\partial}\partial\rho)^{n-1}$
where the function $\rho$ is defined as
$\rho(z)=\sum_{j=1}^{n}|z_{j}|^{2p_{j}}-1$. In fact an easy
calculation shows that this measure is the boundary Monge-Amp\`ere
measure associated with the exhaustion function
$u(z)=\log(|z_{1}|^{2p_{1}}+|z_{2}|^{2p_{2}}+...+|z_{n}|^{2p_{n}})$,
$\textbf{p}=(p_{1}, p_{2},..., p_{n})\in\mathbb{Z}^{n}$ of the
complex ellipsoid $\mathbb{B}^{\textbf{p}}$. Now let
$d(\xi,z)\doteq|v(\xi,z)|+|v(z,\xi)|$ be the quasi-metric defined on
$\overline{\mathbb{B}^{\textbf{p}}}$ where
$v(\xi,z)=\langle\partial\rho(\xi),\xi-z\rangle$. Then explicitly
$v(\xi,z)=\sum_{j=1}^{n}p_{j}|\xi_{j}|^{2(p_{j}-1)}\bar{\xi_{j}}(\xi_{j}-z_{j})$
and define the boundary balls as
$B(z,\varepsilon)=\{\xi\in\partial\mathbb{B}^{\textbf{p}},d(\xi,z)<\varepsilon
\}$. It is shown that $(\partial\mathbb{B}^{\textbf{p}},d,d\mu_{u})$
is a space of homogenous type (\cite{thomas},pg:1483) and
$\displaystyle{\frac{1}{(v(\xi,z))^{n}}}$ is a standard kernel. In
the following argument we will use his homogeneity result to apply
the previous rather general procedure on the complex ellipsoid case
with the so called
Cauchy-Fantappie kernel:\\
The Cauchy-Fantappie integral (from now on we will refer as
CF-integral) of an $L^{p}(d\mu_{u})$ function $f^{*}$ is defined as
\begin{equation*}
Hf(z)=\left(\frac{1}{2\pi
i}\right)^{n}\int_{\partial\mathbb{B}^{\textbf{p}}}\frac{f^{*}(\xi)d\mu_{u}(\xi)}{(v(\xi,z))^{n}}
\end{equation*}
Before proceeding to further results let us briefly discuss the
Cauchy-Fantappie kernel. In the theory of holomorphic functions in
one variable a fundamental tool is Cauchy integral formula and in
the case of several variables one wants a suitable generalization to
Cauchy integral. One of the possible choices for the generalization
is the so called Szeg\"{o} kernel however except for a few domains
Szeg\"{o} kernel has no explicit formulation. One other choice is
the well known Bochner-Martinelli kernel but the major shortcoming
of this kernel is that it is not holomorphic in $z$ variable (For
details see \cite{range}). Contrary to Bochner-Martinelli kernel,
Cauchy-Fantappie kernel is holomorphic in $z$ hence it is a natural
generalization of Cauchy kernel to multivariable case and it has
reproducing property for the functions in the algebra
$A(\mathbb{B}^{\textbf{p}})$ (\cite{range}, Theorem 3.4). Hardy
spaces which are examined in \cite{thomas} are exactly the
Poletsky-Stessin Hardy spaces $H^{p}_{u}(\mathbb{B}^{\textbf{p}})$
that are generated by the exhaustion function $u$. At the beginning
of this section it is shown that for the functions in
$H^{p}_{u}(\mathbb{B}^{\textbf{p}})$ the boundary value function
$f^{*}\in L^{p}(d\mu_{u})$ exists so the CF-integral of $f^{*}$ is
well-defined. Now we will show that CF-integral has reproducing
property for the functions in $H^{p}_{u}(\mathbb{B}^{\textbf{p}})$:
\begin{prop}
Let $f\in H^{p}_{u}(\mathbb{B}^{\textbf{p}})$ be a holomorphic
function then
\begin{equation*}
f(z)=Hf(z)=\left(\frac{1}{2\pi
i}\right)^{n}\int_{\partial\mathbb{B}^{\textbf{p}}}\frac{f^{*}(\xi)d\mu_{u}(\xi)}{(v(\xi,z))^{n}}
\end{equation*}
\end{prop}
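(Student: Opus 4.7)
The strategy is to derive the reproducing identity from the classical reproducing property of the Cauchy-Fantappie kernel on the ball algebra $A(\mathbb{B}^{\textbf{p}})$ (\cite{range}, Theorem 3.4) by means of a radial dilation approximation, and then pass to the limit. For $s\in(0,1)$ set $f_{s}(w):=f(sw)$. Since $\mathbb{B}^{\textbf{p}}$ is a balanced (circular) domain, one has $s\overline{\mathbb{B}^{\textbf{p}}}\subset\mathbb{B}^{\textbf{p}}$, so each $f_{s}$ is holomorphic on a neighborhood of $\overline{\mathbb{B}^{\textbf{p}}}$ and in particular lies in $A(\mathbb{B}^{\textbf{p}})$. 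Applying the reproducing formula to $f_{s}$ (and using the identification of the standard Cauchy-Fantappie boundary form with $d\mu_{u}$ recalled in the discussion preceding the proposition) gives, for every $z\in\mathbb{B}^{\textbf{p}}$,
\[
f(sz)=\left(\frac{1}{2\pi i}\right)^{n}\int_{\partial\mathbb{B}^{\textbf{p}}}\frac{f(s\xi)\,d\mu_{u}(\xi)}{v(\xi,z)^{n}}.
\]

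The next step is to let $s\to 1^{-}$. The left-hand side tends to $f(z)$ by continuity at an interior point. For the right-hand side, I first observe that the kernel $\xi\mapsto v(\xi,z)^{-n}$ is continuous and bounded on $\partial\mathbb{B}^{\textbf{p}}$ for any fixed interior $z$: the convexity of $\mathbb{B}^{\textbf{p}}$ forces $\xi-z$ to have a non-trivial pairing with the outward one-form $\partial\rho(\xi)$, so $v(\xi,z)$ does not vanish on $\partial\mathbb{B}^{\textbf{p}}$. By Theorem 2.1, $f(s\xi)\to f^{*}(\xi)$ for $\mu_{u}$-almost every $\xi$. It therefore suffices to upgrade this almost-everywhere convergence to convergence of $f_{s}$ to $f^{*}$ in $L^{1}(d\mu_{u})$; the bounded kernel then allows one to pass to the limit inside the integral.

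The $L^{1}$ convergence will be established by the same slicing decomposition used in the proof of Theorem 2.1, namely
\[
\int_{\partial\mathbb{B}^{\textbf{p}}}|f_{s}-f^{*}|\,d\mu_{u}=\int_{A_{0}}\left(\int_{\partial E}|f_{s}-f^{*}|\,d\mu_{g}\right)\omega.
\]
For each complex line $l\in A_{0}$ the restriction $f|_{E}$ lies in the one-variable Hardy space $H^{1}_{g}(E)$, and the classical fact that the radial dilates of an $H^{1}$-function converge to the boundary value in $L^{1}$-norm yields the pointwise vanishing of the inner integral as $s\to 1^{-}$. The main obstacle I foresee is producing an $\omega$-integrable majorant for the inner integrals so that dominated convergence can be applied to the outer integral. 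This is handled by combining the standard monotonicity $\|f_{s}|_{E}\|_{H^{1}_{g}(E)}\le\|f|_{E}\|_{H^{1}_{g}(E)}$ of one-variable Hardy norms under dilation with the Fatou-type argument of Theorem 2.1, which guarantees that $l\mapsto\|f|_{E}\|_{H^{1}_{g}(E)}$ is $\omega$-integrable whenever $f\in H^{1}_{u}(\mathbb{B}^{\textbf{p}})$. Once this integrability is in place, the limit of the right-hand side is the asserted integral and the proposition follows.
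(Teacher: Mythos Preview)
Your proof is correct but follows a genuinely different route from the paper's. The paper applies the slicing/Fubini decomposition of Theorem~2.1 \emph{directly} to the CF-integral $Hf(z)$ and then invokes, on each slice $\partial E$, a one-variable Cauchy reproducing formula (citing \cite{stein}). You instead start from the $n$-variable reproducing property for the dilates $f_{s}\in A(\mathbb{B}^{\textbf{p}})$ (already available from \cite{range} and quoted in the text), and pass to the limit $s\to 1^{-}$; slicing enters only as an auxiliary device to show $f_{s}\to f^{*}$ in $L^{1}(d\mu_{u})$ via dominated convergence, with the $\omega$-integrable majorant $2\int_{\partial E}|f^{*}|\,d\mu_{g}$ on each line. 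Your route is longer but more transparent: it relies only on the boundedness of the kernel at interior points and on the boundary $L^{1}$ convergence of dilates, both of which you justify explicitly, whereas the paper's argument leaves implicit how the $n$-dimensional kernel $v(\xi,z)^{-n}$, restricted to a slice through the origin that need not contain $z$, reduces to a one-variable Cauchy kernel. As a side benefit, your argument yields the $L^{1}(d\mu_{u})$ convergence of dilates without the maximal-function machinery that the paper later invokes for the same conclusion in Theorem~2.6.
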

\begin{proof}
By the Fubini type integral formula that we used in Theorem 2.1 we
get that
\begin{equation*}
Hf(z)=\left(\frac{1}{2\pi
i}\right)^{n}\int_{A_{0}}\left(\int_{\partial
E}\frac{f^{*}(\eta)}{(v(\eta,z))^{n}}d\mu_{g}(\eta)\right)\omega
\end{equation*}and on every ellipse $E$ by (\cite{stein}, 9.7) we
have reproducing property as a consequence of one variable Cauchy
integral formula. Hence the result follows.
\end{proof}
Now define the maximal function for the functions in
$L^{p}(d\mu_{u})$ as follows :
\begin{equation*}
Mf^{*}(\xi)=\sup_{\varepsilon>0}\frac{1}{\mu_{u}(B(\xi,\varepsilon))}\int_{B(\xi,\varepsilon)}|f^{*}|d\mu_{u}
\end{equation*}The next result is a consequence of the general method given in Theorem 2.3 for complex ellipsoid case and it gives the
relation between the CF-integral and the maximal function of an
$L^{p}(d\mu_{u})$ function $f^{*}$:
\begin{cor}
Suppose $Hf(z)$ is the CF-integral of an $L^{p}(d\mu_{u})$ function
$f^{*}$. Let
$Q_{\alpha}(y)=\{z\in\overline{\mathbb{B}^{\textbf{p}}},
|v(y,z)|<\alpha\delta_{y}(z)\}$ for
$y\in\partial\mathbb{B}^{\textbf{p}}$, $z\in
\mathbb{B}^{\textbf{p}}$ with $\delta_{y}(z)=\min\{d(z,\partial
X),d(z,T_{y})\}$ ($T_{y}$ is the tangent plane at $y$), $\alpha>0$,
be the admissible approach region. Then
\begin{itemize}
\item When $d(y,z)=\varepsilon$ and $z\in Q_{\alpha}(y)$ the
following inequality holds
\begin{equation*}
|Hf(z)|\leq\tilde{A}\sum_{k=1}^{\infty}(\mu_{u}(B(y,2^{k}\varepsilon)))^{-1}\int_{B(y,2^{k}\varepsilon)}|f^{*}|d\mu_{u}
\end{equation*}
\item $\sup_{z\in Q_{\alpha}(y)}|Hf(z)|\leq\tilde{A}Mf^{*}(y)$.
\end{itemize}
\end{cor}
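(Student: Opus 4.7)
The plan is to observe that this corollary is essentially a direct specialization of Theorem 2.3 to the complex ellipsoid with the Cauchy--Fantappie kernel, so the task reduces to verifying that the three abstract hypotheses of that theorem are met in the present setting. Namely, one needs that $d$ is a quasi-metric on $\overline{\mathbb{B}^{\mathbf{p}}}$, that $(\partial\mathbb{B}^{\mathbf{p}},d,\mu_{u})$ is a space of homogeneous type, and that the kernel
\begin{equation*}
K(z,\xi)=\left(\frac{1}{2\pi i}\right)^{n}\frac{1}{(v(\xi,z))^{n}}
\end{equation*}
is a standard kernel in the sense of the third bullet point preceding Theorem 2.3.

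First I would invoke the work of Hansson \cite{thomas}: the quasi-metric property of $d(\xi,z)=|v(\xi,z)|+|v(z,\xi)|$ and the homogeneity of $(\partial\mathbb{B}^{\mathbf{p}},d,\mu_{u})$ are proved there, together with the identification of the boundary measure $\partial\rho\wedge(\overline{\partial}\partial\rho)^{n-1}$ with the Monge--Amp\`ere boundary measure $\mu_{u}$ associated with the exhaustion $u$ used throughout this paper. The standard-kernel bound $|1/v(\xi,z)^{n}|\leq C/\mu_{u}(B(\xi,\eta))$ with $\eta=d(z,\xi)$ is also recorded in \cite{thomas}, with the appropriate dependence on the interior point $z$. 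Once these three ingredients are in place, Theorem 2.3 applies verbatim with $\Omega=\mathbb{B}^{\mathbf{p}}$, $\mu=\mu_{u}$ and the quasi-metric $d$ in place of the abstract $\rho$.

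Next I would just rewrite the first bullet of Theorem 2.3 with the CF-integral $Hf(z)$ replacing the generic $Kf^{*}(z)$; this yields the dyadic estimate
\begin{equation*}
|Hf(z)|\leq \tilde{A}\sum_{k=1}^{\infty}\bigl(\mu_{u}(B(y,2^{k}\varepsilon))\bigr)^{-1}\int_{B(y,2^{k}\varepsilon)}|f^{*}|\,d\mu_{u},
\end{equation*}
valid whenever $d(y,z)=\varepsilon$ and $z\in Q_{\alpha}(y)$. The second bullet then follows by bounding each averaged integral by $Mf^{*}(y)$ and summing the resulting geometric series (the factor $2^{-k}$ arising from the standard-kernel bound in the proof of Theorem 2.3 makes the sum converge), so that $\sup_{z\in Q_{\alpha}(y)}|Hf(z)|\leq \tilde{A} Mf^{*}(y)$.

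The only potential obstacle is cosmetic rather than substantive: one must check that the notion of admissible approach region $Q_{\alpha}(y)=\{z:|v(y,z)|<\alpha\delta_{y}(z)\}$ used in the corollary is compatible with the region $\{z:d(y,z)<\alpha\delta_{y}(z)\}$ arising in Theorem 2.3. This is the case because $|v(y,z)|$ and $d(y,z)$ are comparable along the non-tangential directions that define $\delta_{y}$, so enlarging or shrinking the aperture $\alpha$ by a fixed constant turns one region into the other and the conclusion of Theorem 2.3 transfers without loss. Apart from this identification, no new analytical input is required beyond the citations to \cite{thomas} and Theorem 2.3.
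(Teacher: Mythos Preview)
Your proposal is correct and matches the paper's approach exactly: the paper does not supply a separate proof for this corollary but simply presents it as the specialization of Theorem 2.3 to the complex ellipsoid with the Cauchy--Fantappie kernel, having already recorded (via the citation to Hansson) that $d$ is a quasi-metric, that $(\partial\mathbb{B}^{\mathbf{p}},d,\mu_{u})$ is homogeneous, and that $1/v(\xi,z)^{n}$ is a standard kernel. Your discussion actually supplies more detail than the paper itself, including the observation about reconciling $|v(y,z)|$ with $d(y,z)$ in the definition of $Q_{\alpha}(y)$, which the paper leaves implicit.
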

Next using this maximal function tools we will see the existence of
boundary values on the admissible approach regions $Q_{\alpha}(y)$,
$y\in\partial\mathbb{B}^{\textbf{p}}$:
\begin{theorem}
Let $f\in H^{p}_{u}(\mathbb{B}^{\textbf{p}})$ be a holomorphic
function and $1\leq p<\infty$. Suppose that $f^{*}$ is the radial
limit function then
\begin{equation*}
 \lim_{Q_{\alpha}(\xi)\ni
z\rightarrow\xi}f(z)=f^{*}(\xi)
\end{equation*}
exists for almost every $\xi\in\partial\mathbb{B}^{\textbf{p}}$.
\end{theorem}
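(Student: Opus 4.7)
The plan is to mimic Stein's classical admissible-limit theorem in the Poletsky-Stessin setting, using the three tools just developed: the reproducing Cauchy-Fantappie formula of Proposition 2.1, the pointwise majorization of admissible maximal functions in Corollary 2.1, and the weak-type and strong-$L^{p}$ bounds of Theorem 2.2 for the Hardy-Littlewood maximal operator on the space of homogenous type $(\partial\mathbb{B}^{\textbf{p}},d,d\mu_{u})$. The strategy has three stages: a pointwise control of $f$ on $Q_{\alpha}(\xi)$ by an $L^{p}(d\mu_{u})$ function, a density step identifying an $H^{p}_{u}$-dense subclass on which admissible limits are manifest, and the standard $\varepsilon$-argument patching these together.

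I would first introduce the admissible maximal operator $\mathcal{M}_{\alpha}f(\xi):=\sup_{z\in Q_{\alpha}(\xi)}|f(z)|$. Proposition 2.1 rewrites $f=Hf^{*}$, Corollary 2.1 yields $\mathcal{M}_{\alpha}f(\xi)\le \tilde{A}\,Mf^{*}(\xi)$, and Theorem 2.2 transfers strong-$L^{p}$ boundedness (for $1<p<\infty$) and weak-type $(1,1)$ from $M$ to $\mathcal{M}_{\alpha}$. This is the engine of the argument.

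For the dense subclass I would use the dilations $f_{r}(z):=f(rz)$, $0<r<1$. Since $\mathbb{B}^{\textbf{p}}$ is bounded and balanced, each $f_{r}$ is holomorphic in a neighborhood of $\overline{\mathbb{B}^{\textbf{p}}}$, hence $f_{r}\in A(\mathbb{B}^{\textbf{p}})$ and its admissible limit at every $\xi\in\partial\mathbb{B}^{\textbf{p}}$ is simply the continuous value $f_{r}(\xi)=f(r\xi)$. To upgrade this to $L^{p}$-convergence $f_{r}|_{\partial}\to f^{*}$ in $L^{p}(d\mu_{u})$, I would dominate $|f_{r}(\xi)|$ by an $L^{p}(d\mu_{u})$ function: applying the Fubini-type slicing formula from the proof of Theorem 2.1 reduces the problem to the one-variable fact that, for $\omega$-a.e. line, the radial maximal function of $f|_{E}\in H^{p}_{g}(E)$ belongs to $L^{p}(\partial E)$; integrating over lines gives $\sup_{0<r<1}|f(r\xi)|\in L^{p}(d\mu_{u})$, and dominated convergence combined with Theorem 2.1 produces $\|f^{*}-f_{r}|_{\partial\mathbb{B}^{\textbf{p}}}\|_{L^{p}(d\mu_{u})}\to 0$ as $r\to 1^{-}$.

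To close the argument, for $\lambda>0$ I would set
\begin{equation*}
E_{\lambda}:=\{\xi\in\partial\mathbb{B}^{\textbf{p}}:\limsup_{Q_{\alpha}(\xi)\ni z\to\xi}|f(z)-f^{*}(\xi)|>\lambda\},
\end{equation*}
split $f=f_{r}+(f-f_{r})$, and use continuity of $f_{r}$ on $\overline{\mathbb{B}^{\textbf{p}}}$ to obtain
\begin{equation*}
E_{\lambda}\subseteq\{\xi:\mathcal{M}_{\alpha}(f-f_{r})(\xi)>\lambda/2\}\cup\{\xi:|f_{r}(\xi)-f^{*}(\xi)|>\lambda/2\}.
\end{equation*}
Applying the engine of the second paragraph to $f-f_{r}=H(f^{*}-f_{r}|_{\partial})$ and Chebyshev's inequality to the second set shows that both pieces have $\mu_{u}$-measure controlled by a power of $\|f^{*}-f_{r}|_{\partial}\|_{L^{p}(d\mu_{u})}$, which tends to $0$ with $r\to 1^{-}$; hence $\mu_{u}(E_{\lambda})=0$ for every $\lambda>0$ and the admissible limit equals $f^{*}$ at $\mu_{u}$-a.e. $\xi$. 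The main technical obstacle is the density step: the $L^{p}$-dominated convergence input must be extracted from the slicing decomposition of Theorem 2.1 combined with one-variable Hardy theory on the slices, which is where the geometry of the anisotropic quasi-metric $d$ and the boundary measure $d\mu_{u}$ interact most delicately, whereas the engine stage and the final $\varepsilon$-argument are essentially formal once the density is in hand.
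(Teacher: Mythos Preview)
Your proof is correct and shares the paper's three-stage architecture: the maximal-function engine built from Proposition~2.1, Corollary~2.1 and Theorem~2.2, a dense subclass with manifest admissible limits, and the standard $\varepsilon$-splitting of the exceptional set. The one substantive difference is the choice of dense class. The paper approximates $f^{*}$ in $L^{p}(d\mu_{u})$ by an arbitrary $g\in C(\partial\mathbb{B}^{\textbf{p}})$ and invokes that $Hg$ has admissible limit $g$ at \emph{every} boundary point, so no $H^{p}_{u}$-approximation of $f$ itself is required. You instead take the dilations $f_{r}\in A(\mathbb{B}^{\textbf{p}})$, which makes the admissible-limit step trivial by continuity but forces you to prove $f_{r}|_{\partial}\to f^{*}$ in $L^{p}(d\mu_{u})$ beforehand. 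The paper establishes exactly this convergence \emph{afterwards} (Theorem~2.6), and does it more cheaply by dominating $|f(r\xi)|$ directly with $Mf^{*}$ via Corollary~2.1 rather than via slicing and one-variable maximal theorems. Thus the paper's route is a touch shorter since it rests only on density of continuous functions in $L^{p}$ of a finite Borel measure; your route has the compensating advantage of staying entirely inside holomorphic boundary data and not needing the boundary behaviour of the Cauchy--Fantappie integral on arbitrary continuous (non-holomorphic) $g$, which the paper asserts without further justification.
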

\begin{proof}
If $\varepsilon>0$ then choose $g\in
C(\partial\mathbb{B}^{\textbf{p}})$ so that
$\|f^{*}-g\|_{L^{p}_{u}(\partial\mathbb{B}^{\textbf{p}})}<\varepsilon^{2}$.
Then we know that $\lim_{Q_{\alpha}(\xi)\ni
z\rightarrow\xi}Hg(z)=g(\xi)$ for all
$\xi\in\partial\mathbb{B}^{\textbf{p}}$. Therefore
\begin{equation*}
\mu_{u}\{\xi: \limsup_{Q_{\alpha}(\xi)\ni
z\rightarrow\xi}|f(z)-f^{*}(\xi)|>\varepsilon\}\leq\mu_{u}\{\xi:
\limsup_{Q_{\alpha}(\xi)\ni
z\rightarrow\xi}|f(z)-Hg(z)|>\varepsilon/3\}
\end{equation*}
\begin{equation*}
+\mu_{u}\{\xi: \limsup_{Q_{\alpha}(\xi)\ni
z\rightarrow\xi}|Hg(z)-g(\xi)|>\varepsilon/3\}+\mu_{u}\{\xi:
\limsup_{Q_{\alpha}(\xi)\ni
z\rightarrow\xi}|g(\xi)-f^{*}(\xi)|>\varepsilon/3\}
\end{equation*}

$\leq\mu_{u}\{\xi:C_{\alpha}M(f^{*}-g)
>\varepsilon/3\}+(\|f^{*}-g\|_{L^{p}_{u}(\partial \mathbb{B}^{\textbf{p}})}/(\varepsilon/3))^{p}\leq
\acute{C}_{\alpha}\varepsilon^{p} $\\ Hence the result follows.
\end{proof}
Next, we will give an invariant form of the Fatou type theorem for
the boundary values of Poletsky-Stessin Hardy spaces on complex
ellipsoids. Let us first give the preliminaries :\\
Let $k_{\mathbb{B}^{\textbf{p}}}$ be the Kobayashi-Royden metric on
$\mathbb{B}^{\textbf{p}}$. Let $U$ be a tubular neighborhood of
$\partial\mathbb{B}^{\textbf{p}}$ and take $\varepsilon_{0}$ to be
the one fourth of the distance of $U^{c}$ to
$\partial\mathbb{B}^{\textbf{p}}$. Let $\nu_{P}$ be the unit outward
normal vector to $\partial\mathbb{B}^{\textbf{p}}$ at a boundary
point $P\in\partial\mathbb{B}^{\textbf{p}}$. Take a positive
constant $\beta>0$. If $P\in\partial\mathbb{B}^{\textbf{p}}$, then
we let $n_{P}=\{P-t\nu_{P}:\quad 0<t<\varepsilon_{0}\}$. We set
\begin{equation*}
\mathcal{K}_{\beta}(P)=\{z\in\mathbb{B}^{\textbf{p}}:\quad
k_{\mathbb{B}^{\textbf{p}}}(z,n_{P})<\beta\}
\end{equation*}
We know that $\partial\mathbb{B}^{\textbf{p}}$ is strongly
pseudoconvex at all points
$z\in(\partial\mathbb{B}^{\textbf{p}})\cap(\mathbb{C}_{*})^{n}$ so
$\mu_{u}$ almost all points on $\partial\mathbb{B}^{\textbf{p}}$ are
strongly pseudoconvex points ($\ast$). Now combining Theorem 2.4
with (\cite{aladro}, Theorem 1) and using ($\ast$), we obtain the
invariant form of the Fatou type result that we proved in the
previous theorem :
\begin{theorem}
Let $f\in H^{p}_{u}(\mathbb{B}^{\textbf{p}})$ be a holomorphic
function and $1\leq p<\infty$. Suppose that $f^{*}$ is the radial
limit function then for $\beta>0$,
\begin{equation*}
\lim_{\mathcal{K}_{\beta}(P)\ni z\rightarrow P}f(z)=f^{*}(P)
\end{equation*}exists $\mu_{u}$-almost every point $P\in
\partial\mathbb{B}^{\textbf{p}}$.
\end{theorem}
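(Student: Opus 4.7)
The plan is to reduce this to the admissible Fatou theorem (Theorem 2.4) by showing that, at strongly pseudoconvex boundary points, the Kobayashi approach region $\mathcal{K}_\beta(P)$ is contained in some admissible approach region $Q_\alpha(P)$ of the kind used in Theorem 2.4. Once such a containment is in hand, any limit established through $Q_\alpha(P)$ automatically passes to $\mathcal{K}_\beta(P)$, and the null set on which things can fail picks up at most the Levi-flat locus, which is already known by $(\ast)$ to be $\mu_u$-null.

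First I would fix $\beta>0$ and an arbitrary strongly pseudoconvex boundary point $P\in(\partial\mathbb{B}^{\textbf{p}})\cap(\mathbb{C}_{*})^{n}$. On a small tubular neighborhood of such a point the ellipsoid is locally strongly pseudoconvex with smooth boundary, so the hypotheses of Theorem~1 of \cite{aladro} apply: there exist constants $\alpha=\alpha(\beta,P)>0$ and a neighborhood $V$ of $P$ such that
\begin{equation*}
\mathcal{K}_{\beta}(P)\cap V \subseteq Q_{\alpha}(P),
\end{equation*}
where $Q_\alpha(P)$ is the admissible approach region built from the quasi-metric $d$ of Hansson as in Theorem~2.4. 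Because only the behavior of $z$ near $P$ matters for the boundary limit, cutting the Kobayashi region with $V$ is harmless. A short inspection of the proof in \cite{aladro} (or a local normal-form argument) shows that the dependence $\alpha=\alpha(\beta,P)$ can be chosen locally uniform on the strongly pseudoconvex part of the boundary.

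Next I would invoke Theorem~2.4: for $f\in H^{p}_{u}(\mathbb{B}^{\textbf{p}})$ there is a $\mu_u$-null set $N_\alpha\subset\partial\mathbb{B}^{\textbf{p}}$ such that for every $\xi\notin N_\alpha$ the admissible limit
\begin{equation*}
\lim_{Q_\alpha(\xi)\ni z\to\xi} f(z)=f^{*}(\xi)
\end{equation*}
exists. Taking a countable sequence $\alpha_k\uparrow\infty$ and letting $N=\bigcup_k N_{\alpha_k}$, we still have $\mu_u(N)=0$ and the admissible limit exists along every $Q_{\alpha_k}(\xi)$ for $\xi\notin N$. Combined with the containment above, for any strongly pseudoconvex point $P\notin N$ the Kobayashi limit along $\mathcal{K}_\beta(P)$ exists and equals $f^{*}(P)$.

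Finally I would use $(\ast)$: the non-strongly-pseudoconvex locus of $\partial\mathbb{B}^{\textbf{p}}$ is exactly the union of coordinate slices $\{\xi_j=0\}\cap\partial\mathbb{B}^{\textbf{p}}$, and these are $\mu_u$-null because $d\mu_u$ involves the factors $|\xi_j|^{2(p_j-1)}$ (this is visible already in the explicit density computation underlying Proposition~2.1). Hence the exceptional set for our Kobayashi Fatou statement is contained in $N\cup\bigcup_j\{\xi_j=0\}$, which has $\mu_u$-measure zero, and the theorem follows. The only genuine difficulty is the geometric comparison step, i.e.\ justifying the inclusion $\mathcal{K}_\beta(P)\cap V\subseteq Q_\alpha(P)$ with a controlled $\alpha$; everything else is a clean bookkeeping of null sets.
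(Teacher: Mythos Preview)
Your proposal is correct and follows essentially the same route as the paper: combine Theorem~2.4 with Aladro's comparability result (\cite{aladro}, Theorem~1) at the strongly pseudoconvex boundary points, and absorb the remaining Levi-flat locus using $(\ast)$. The paper states this in a single sentence, whereas you have spelled out the bookkeeping (localization near $P$, the countable union over apertures $\alpha_k$, and the explicit $\mu_u$-nullity of the coordinate slices), but the underlying argument is identical.
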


As another application of the result given in Corollary 2.1, we will
show an approximation result on the Poletsky-Stessin Hardy spaces:
\begin{theorem}
Polynomials are dense in $H^{p}_{u}(\mathbb{B}^{\textbf{p}})$.
\end{theorem}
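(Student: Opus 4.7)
The plan is a standard two-step approximation: first, replace $f\in H^p_u(\mathbb{B}^{\textbf{p}})$ by its radial dilations $f_r(z)=f(rz)$ for $r\in(0,1)$, which extend holomorphically to a neighborhood of $\overline{\mathbb{B}^{\textbf{p}}}$; then approximate each $f_r$ uniformly by polynomials. For the second step, since $\mathbb{B}^{\textbf{p}}$ is balanced, $f_r$ is holomorphic on $(1/r)\mathbb{B}^{\textbf{p}}\supset\overline{\mathbb{B}^{\textbf{p}}}$, so its Taylor expansion at the origin converges uniformly on $\overline{\mathbb{B}^{\textbf{p}}}$, yielding polynomial partial sums $P_N\to f_r$ in the sup-norm. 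Because $\mu_u$ has finite total mass, this gives
\[
\|P_N-f_r\|_{H^p_u}\leq \mu_u(\partial\mathbb{B}^{\textbf{p}})^{1/p}\,\|P_N-f_r\|_\infty\longrightarrow 0.
\]

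The substance of the proof is therefore the dilation convergence $\|f_r-f\|_{H^p_u}\to 0$ as $r\to 1^-$, i.e., the $L^p(d\mu_u)$-convergence of the boundary functions $f_r^*(\xi)=f(r\xi)$ to $f^*(\xi)$. Pointwise $\mu_u$-a.e.\ convergence is precisely the radial-limit statement of Theorem 2.1. For a dominant, the radial segment $\{t\xi:0<t<1\}$ lies inside an admissible region $Q_\alpha(\xi)$ for $\alpha$ sufficiently large, so combining Proposition 2.1 with Corollary 2.1 gives
\[
|f(r\xi)|=|Hf^*(r\xi)|\leq \tilde{A}\,Mf^*(\xi).
\]
For $1<p<\infty$, Theorem 2.2(a) places $Mf^*$ in $L^p(d\mu_u)$, and the dominated convergence theorem finishes the proof.

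The main obstacle is the endpoint $p=1$, where $M$ is only of weak type $(1,1)$ and no integrable dominant is directly available. I would circumvent this via the slicing identity implicit in Theorem 2.1, which decomposes $\|f\|_{H^1_u}$ as an integral over the manifold $A_0$ of complex lines through the origin of the one-variable Poletsky-Stessin $H^1_g(E)$ norms along the disk-slices $E$. Applying the classical one-dimensional $H^1$-convergence of dilations on each slice, together with the monotonicity bound $\|(f|_E)_r\|_{H^1_g(E)}\leq \|f|_E\|_{H^1_g(E)}$, a second use of dominated convergence over $A_0$ delivers the required $L^1$-convergence and hence completes the argument for all $1\leq p<\infty$.
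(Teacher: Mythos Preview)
Your argument for $1<p<\infty$ coincides with the paper's: both use Theorem~2.1 for pointwise $\mu_u$-a.e.\ convergence of $f_r\to f^*$, invoke the reproducing property (Proposition~2.1) together with the admissible maximal estimate (Corollary~2.1) to obtain the dominant $|f(r\xi)|\le \tilde A\,Mf^*(\xi)$, place $Mf^*$ in $L^p(d\mu_u)$ via Theorem~2.2(a), and conclude by dominated convergence and polynomial approximation of functions holomorphic across $\overline{\mathbb{B}^{\textbf{p}}}$. The paper cites the complete Reinhardt structure and \cite{zorn}; your direct Taylor argument on a balanced domain is the same mechanism.

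Where you go beyond the paper is the endpoint $p=1$. The paper's proof simply asserts ``$Mf^{*}\in L^{p}_{u}(\partial\mathbb{B}^{\textbf{p}})$'' for all $1\le p<\infty$ and applies dominated convergence, glossing over the fact that Theorem~2.2(a) only yields $Mf^*\in L^p$ for $p>1$. Your slicing remedy---reducing to one-variable $H^1$-dilation convergence on each complex line through the origin and dominating the $A_0$-integral by $2\|f|_E\|_{H^1_g(E)}$, which is $\omega$-integrable with integral $2\|f\|_{H^1_u}$---is a genuine fix the paper does not supply. It works because each slice $E$ is the unit disk in the parameter $t$ and $g_E$ is radial, so $d\mu_g$ on $\partial E$ is a constant multiple of $d\theta$ and the monotonicity $\|(f|_E)_r\|_{H^1_g}\le\|f|_E\|_{H^1_g}$ is the classical subharmonicity bound for $|f|$.
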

\begin{proof}
Let $f\in H^{p}_{u}(\mathbb{B}^{\textbf{p}})$ be a holomorphic
function, $1\leq p<\infty$ and let $f_{r}(\xi)=f(r\xi)$ for
$\xi\in\partial\mathbb{B}^{\textbf{p}}$. Then we have
$f(r\xi)\rightarrow f^{*}(\xi)$ $\mu_{u}$ almost everywhere. By the
previous proposition we know that $Hf(z)=f(z)$ when $f\in
H^{1}_{u}(\mathbb{B}^{\textbf{p}})$. Using this and the previous
results on maximal function we have $|f(r\xi)|\leq Mf^{*}$, where
$Mf^{*}\in L^{p}_{u}(\partial\mathbb{B}^{\textbf{p}})$ then by the
Lebesgue Dominated Convergence Theorem we have that
$f_{r}\rightarrow f^{*}$ in
$L^{p}_{u}(\partial\mathbb{B}^{\textbf{p}})$. Furthermore the
complex ellipsoid is a complete Reinhardt domain so as a consequence
of series expansion we deduce that polynomials are dense in
$A(\mathbb{B}^{\textbf{p}})$ in the topology of uniform convergence
on compact subsets (\cite{zorn}, Lemma 2). Hence polynomials are
dense in $H^{p}_{u}(\mathbb{B}^{\textbf{p}})$.
\end{proof}

\section {Composition Operators: Boundedness and Compactness}
Let $\phi:\mathbb{B}^{\textbf{p}}\rightarrow
\mathbb{B}^{\textbf{p}}$ be a holomorphic self map of
$\mathbb{B}^{\textbf{p}}$. The linear composition operator induced
by the symbol $\phi$ is defined by $C_{\phi}(f)=f\circ\phi$,
$f\in\mathcal{O}(\mathbb{B}^{\textbf{p}})$. In \cite{pol}, Poletsky
and Stessin gave necessary and sufficient conditions for the
boundedness and compactness of a composition operator acting on
Poletsky-Stessin Hardy spaces in terms of generalized Nevanlinna
counting functions. In this section we will characterize the
boundedness and compactness of composition operators acting on
Poletsky-Stessin Hardy spaces on complex ellipsoids in terms of
Carleson conditions.\\ Let $d$ be the quasi-metric given in Section
2, then given a boundary point $\xi$ and a positive constant
$\varepsilon>0$ we set the balls as follows:
\begin{equation*}
Q(\xi,\varepsilon)=\{z\in\overline{\mathbb{B}^{\textbf{p}}}:\quad
d(z,\xi)<\varepsilon\}
\end{equation*}
\begin{equation*}
B(\xi,\varepsilon)=Q(\xi,\varepsilon)\cap\partial\mathbb{B}^{\textbf{p}}
\end{equation*}
As in Section 2, for a function $f^{*}\in
L^{p}_{u}(\mathbb{B}^{\textbf{p}})$, $Hf$ denotes the
Cauchy-Fantappie integral of $f^{*}$. Now with this notation we have
the following result:
\begin{theorem}
Let $\mu$ be a positive, finite measure on
$\overline{\mathbb{B}^{\textbf{p}}}$. Then $\mu$ is bounded in
$L^{p}_{u}(\mathbb{B}^{\textbf{p}})$, $1<p<\infty$ i.e. for some
positive constant $C>0$,
\begin{equation*}
(\ast)\quad
\int_{\overline{\mathbb{B}^{\textbf{p}}}}|Hf|^{p}d\mu\leq
C\int_{\partial\mathbb{B}^{\textbf{p}}}|f^{*}|^{p}d\mu_{u}
\end{equation*}if and only if
\begin{equation*}
(\ast\ast) \quad \mu(Q(\xi,\varepsilon))\leq
C\mu_{u}(B(\xi,\varepsilon))
\end{equation*}for all $\xi$ and $\varepsilon$.
\end{theorem}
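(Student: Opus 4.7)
The plan is to follow the classical Carleson-embedding template, adapted to the geometry of the complex ellipsoid, leveraging the tools already established in Section~2: the space $(\partial\mathbb{B}^{\textbf{p}}, d, \mu_u)$ is of homogeneous type (Hansson's result cited before Proposition~2.1); the Cauchy--Fantappi\`e integral satisfies the nontangential maximal bound $\sup_{z\in Q_\alpha(y)}|Hf(z)|\leq \tilde{A}\, Mf^*(y)$ from Corollary~2.1; and the maximal operator $M$ is strong type $(p,p)$ on $L^p(d\mu_u)$ for $1<p\leq\infty$ by Theorem~2.2.

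For the sufficiency direction $(\ast\ast)\Rightarrow(\ast)$, I would first upgrade the ball-by-ball condition to a tent-set estimate. Given an open set $E\subset\partial\mathbb{B}^{\textbf{p}}$, define its tent
\begin{equation*}
\widehat{E}=\{z\in\overline{\mathbb{B}^{\textbf{p}}}\,:\,\{y\in\partial\mathbb{B}^{\textbf{p}}\,:\,z\in Q_\alpha(y)\}\subset E\}.
\end{equation*}
A Whitney-type decomposition of $E$ into disjoint balls $\{B(\xi_j,\varepsilon_j)\}$, available from the homogeneous space structure, has the property that the associated Carleson regions $Q(\xi_j,c\varepsilon_j)$ cover $\widehat{E}$ with bounded overlap, so summing $(\ast\ast)$ gives $\mu(\widehat{E})\leq C'\mu_u(E)$. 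Next, Corollary~2.1 forces the level set $\{|Hf|>\lambda\}$ to lie inside the tent over $\{Mf^*>\lambda/\tilde{A}\}$, whence
\begin{equation*}
\mu(\{|Hf|>\lambda\})\leq C'\,\mu_u(\{Mf^*>\lambda/\tilde{A}\}).
\end{equation*}
Integrating in $\lambda$ via the layer-cake formula and applying the $L^p$ bound $\|Mf^*\|_p\leq A_p\|f^*\|_p$ from Theorem~2.2(a) delivers $(\ast)$.

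For the reverse direction $(\ast)\Rightarrow(\ast\ast)$, I would fix $\xi\in\partial\mathbb{B}^{\textbf{p}}$ and $\varepsilon>0$, pick an ``apex'' point $z_{\xi,\varepsilon}$ sitting at anisotropic distance of order $\varepsilon$ from $\xi$ inside the domain, and test $(\ast)$ against the normalized characteristic function $f^* = \chi_{B(\xi,\varepsilon)}$. Because $1/v(\cdot,z)^n$ is a standard kernel, one has $|v(\eta,z)|^n\asymp \mu_u(B(\xi,\varepsilon))$ for $\eta\in B(\xi,\varepsilon)$ and $z$ near $z_{\xi,\varepsilon}$, so $|Hf(z)|\gtrsim 1$ on a fixed portion of $Q(\xi,\varepsilon)$ about the apex. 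Plugging this lower bound and $\|\chi_{B(\xi,\varepsilon)}\|_{L^p(d\mu_u)}^p=\mu_u(B(\xi,\varepsilon))$ into $(\ast)$ produces exactly $(\ast\ast)$.

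The main obstacle will be the tent-set step $\mu(\widehat{E})\leq C'\mu_u(E)$: one must carefully match the admissible approach region $Q_\alpha(y)$ with the quasi-metric Carleson boxes $Q(\xi,\varepsilon)$, verifying that for each Whitney ball $B(\xi_j,\varepsilon_j)$ the slice of $\widehat{E}$ at ``height'' comparable to $\varepsilon_j$ is genuinely captured by $Q(\xi_j,c\varepsilon_j)$. This geometric matching, together with the lower bound for $|Hf|$ needed in the necessity direction, both rest on the fine geometry of the anisotropic balls on $\partial\mathbb{B}^{\textbf{p}}$ already worked out by Hansson, so the argument reduces to assembling these ingredients rather than proving anything new about the ellipsoid's boundary geometry.
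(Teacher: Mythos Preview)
Your proposal follows essentially the same route as the paper's proof: in the sufficiency direction both arguments cover the open level set $\{Mf^*>\lambda\}$ by boundary balls, lift these to Carleson boxes $Q(\xi_j,\varepsilon_j)$, use Corollary~2.1 to trap $\{|Hf|>\lambda\}$ inside that union, sum $(\ast\ast)$, and finish with the layer-cake formula together with the $L^p$ bound on $M$ from Theorem~2.2. Your tent language and explicit Whitney decomposition are a slightly more formal packaging of the same covering step the paper sketches with its balls $A_j$ and boxes $Q_j$.

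The only notable difference is in the necessity direction: the paper simply writes ``taking $f^*=\chi_{Q(\xi,\varepsilon)}$ in $(\ast)$ we immediately obtain $(\ast\ast)$,'' whereas you correctly spell out that one must test with $\chi_{B(\xi,\varepsilon)}$ and secure a pointwise lower bound $|Hf|\gtrsim 1$ on (a fixed portion of) $Q(\xi,\varepsilon)$ from the standard-kernel estimate before $(\ast)$ yields $(\ast\ast)$. Your treatment there is in fact more careful than the paper's, but the underlying idea is identical.
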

\begin{proof}
Taking $f^{*}=\chi_{Q(\xi,\varepsilon)}$ in $(\ast)$ we immediately
obtain $(\ast\ast)$. For the converse direction, let $f^{*}\in
L^{p}_{u}(\partial\mathbb{B}^{\textbf{p}})$ be given. For
$\lambda>0$, note that by lower semicontinuity the set $\{\xi\in
\partial\mathbb{B}^{\textbf{p}}:\quad Mf^{*}(\xi)>\lambda\}$ is
open, so it consists of countably many open balls $A_{j}$ on
$\partial\mathbb{B}^{\textbf{p}}$. For $\lambda$ large enough that
this set is not the whole boundary, let $Q_{j}$ be the ball
$Q(\xi,\varepsilon)$ for which $\xi$ is the center of $A_{j}$ and
the radius $\varepsilon$ is such that $d(z,\xi)=\varepsilon$
contains the boundary of $A_{j}$. If $z=r\xi_{0}$,
$\xi_{0}\in\partial\mathbb{B}^{\textbf{p}}$, and $|Hf(z)|>\lambda$,
let $J_{z}$ be the set
$\{\eta\in\partial\mathbb{B}^{\textbf{p}}:\quad z\in Q_{3}(\eta)\}$
where $Q_{\alpha}(\xi)$ is the admissible approach region defined in
Section 2. The point $\xi_{0}$ is the center of $J_{z}$ and if
$\gamma$ is a boundary point of $J_{z}$, from the definition of
$Q_{3}(\eta)$ we have,
\begin{equation*}
d(z,\gamma)=3\delta_{\gamma}(z)=3d(z,\xi_{0})
\end{equation*}Thus,
\begin{equation*}
d(\xi_{0},\gamma)\geq C(d(z,\gamma)-d(z,\xi_{0}))=2Cd(z,\xi_{0})
\end{equation*}which means that $z\in Q(\xi_{0},d(\xi_{0},\gamma))$.
Now $z\in Q_{3}(\eta)$ implies $Mf^{*}(\eta)>\lambda$ for $\eta\in
J_{z}$. This means $J_{z}$ is contained in $A_{j}$ for some $j$ so $
Q(\xi_{0},d(\xi_{0},\gamma))$ is a subset of $Q_{j}$ and $z$ is in
$Q_{j}$. Now by $(\ast\ast)$ for some constant $K>0$,
\begin{equation*}
\mu(\{z:\quad|Hf(z)|>\lambda\})\leq\sum\mu(Q_{j})\leq
K\sum\mu_{u}(A_{j})=K\mu_{u}(\{\eta:\quad Mf^{*}(\eta)>\lambda\})
\end{equation*}Now we have that,
\begin{equation*}
\int_{\mathbb{B}^{\textbf{p}}}|Hf(z)|^{p}d\mu=\int_{0}^{\infty}p\lambda^{p-1}\mu(\{z:\quad|Hf(z)|>\lambda\})d\lambda
\end{equation*}
\begin{equation*}
\leq K\int_{0}^{\infty}p\lambda^{p-1}\mu_{u}(\{\eta:\quad
Mf^{*}(\eta)>\lambda\})d\mu_{u}(\eta).
\end{equation*}Then by the maximal function result (\cite{sah}, Corollary
3.3.1) we obtain that,
\begin{equation*}
\int_{\mathbb{B}^{\textbf{p}}}|Hf(z)|^{p}d\mu\leq\widetilde{C}\|f^{*}\|_{L^{p}_{u}(\partial\mathbb{B}^{\textbf{p}})}.
\end{equation*}
\end{proof}As a consequence of this result, one can deduce the
following characterization for the boundedness of the composition
operators:
\begin{theorem}
Let $\phi:\mathbb{B}^{\textbf{p}}\rightarrow
\mathbb{B}^{\textbf{p}}$ be a holomorphic self map of
$\mathbb{B}^{\textbf{p}}$. For $1\leq p<\infty$, the composition
operator $C_{\phi}(f)=f\circ\phi$ is bounded on
$H^{p}_{u}(\mathbb{B}^{\textbf{p}})$ if and only if
$\mu(Q(\xi,\varepsilon))\leq C\mu_{u}(B(\xi,\varepsilon))$ for all
$\xi\in\partial\mathbb{B}^{\textbf{p}}$ and $\varepsilon>0$ where
$\mu(E)=\mu_{u}((\phi)^{-1}(E))$ for all measurable
$E\subset\overline{\mathbb{B}^{\textbf{p}}}$.
\end{theorem}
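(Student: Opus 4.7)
The plan is to reduce boundedness of $C_{\phi}$ to the Carleson embedding in Theorem 3.1 via a change of variables. First observe that the coordinate functions of $\phi$ lie in $H^{\infty}(\mathbb{B}^{\textbf{p}}) \subset H^{p}_{u}(\mathbb{B}^{\textbf{p}})$, so by Theorem 2.1 the radial limit $\phi^{*}(\xi) \in \overline{\mathbb{B}^{\textbf{p}}}$ exists $\mu_{u}$-a.e., and Theorem 2.5 upgrades this to an admissible limit. Consequently the pushforward $\mu := (\phi^{*})_{*}\mu_{u}$ is a well-defined finite positive Borel measure on $\overline{\mathbb{B}^{\textbf{p}}}$ satisfying $\mu(E) = \mu_{u}((\phi^{*})^{-1}(E))$, which matches the measure in the statement.

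Next, for $f \in H^{p}_{u}(\mathbb{B}^{\textbf{p}})$ with boundary trace $f^{*}$, the composition $g = f \circ \phi$ is holomorphic on $\mathbb{B}^{\textbf{p}}$ and its boundary limits satisfy $g^{*}(\xi) = f(\phi^{*}(\xi))$ when $\phi^{*}(\xi) \in \mathbb{B}^{\textbf{p}}$ and $g^{*}(\xi) = f^{*}(\phi^{*}(\xi))$ when $\phi^{*}(\xi) \in \partial\mathbb{B}^{\textbf{p}}$; the second case is justified because $\mu_{u}$-a.e.\ boundary point is strongly pseudoconvex (the remark $(\ast)$ preceding Theorem 2.5), so admissible approach regions in the target are reached admissibly under $\phi$ at such points. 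Combining this with Proposition 2.1, which identifies $f$ with $Hf^{*}$ on $\mathbb{B}^{\textbf{p}}$ and, through Theorem 2.5, with $f^{*}$ on $\partial\mathbb{B}^{\textbf{p}}$ via admissible limits, the change-of-variables identity
\begin{equation*}
\|C_{\phi} f\|_{H^{p}_{u}}^{p} = \int_{\partial\mathbb{B}^{\textbf{p}}} |g^{*}|^{p}\,d\mu_{u} = \int_{\overline{\mathbb{B}^{\textbf{p}}}} |Hf^{*}|^{p}\,d\mu
\end{equation*}
drops out.

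Sufficiency is then immediate from Theorem 3.1: if $\mu(Q(\xi,\varepsilon)) \leq C\,\mu_{u}(B(\xi,\varepsilon))$, then $\int |Hf^{*}|^{p}\,d\mu \leq C\,\|f\|_{H^{p}_{u}}^{p}$ for $1 < p < \infty$, and the case $p = 1$ follows by rerunning that proof with the weak-type $(1,1)$ bound for the maximal function from Theorem 2.2(b) in place of the strong maximal estimate. For the converse, the displayed identity gives $\int |Hg^{*}|^{p}\,d\mu \leq \|C_{\phi}\|^{p}\,\|g^{*}\|_{L^{p}_{\mu_{u}}}^{p}$ only for $g^{*}$ in the image of the boundary trace map $H^{p}_{u} \to L^{p}_{\mu_{u}}$, so Theorem 3.1 cannot be invoked directly; the range of admissible test functions is strictly smaller than $L^{p}_{\mu_{u}}$. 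I plan to instead test against normalized Cauchy-Fantappi\'e peak functions $f_{w}(z) = c_{w}\,v(w,z)^{-n}$, where for each $\xi \in \partial\mathbb{B}^{\textbf{p}}$ and small $\varepsilon>0$ the base point $w \in \mathbb{B}^{\textbf{p}}$ is chosen at quasi-distance $\varepsilon$ from $\xi$ along the inward normal and $c_{w}$ is a normalization constant making $\|f_{w}\|_{H^{p}_{u}} \sim 1$ via Hansson's size and homogeneity estimates from \cite{thomas}. A standard calculation then yields $|f_{w}(z)|^{p} \gtrsim 1/\mu_{u}(B(\xi,\varepsilon))$ uniformly on $Q(\xi,\varepsilon)$, so that
\begin{equation*}
\mu(Q(\xi,\varepsilon)) \lesssim \mu_{u}(B(\xi,\varepsilon))\int_{\overline{\mathbb{B}^{\textbf{p}}}}|f_{w}|^{p}\,d\mu \leq \|C_{\phi}\|^{p}\,\mu_{u}(B(\xi,\varepsilon)).
\end{equation*}
The hardest step will be establishing the peak and normalization estimates for $f_{w}$ on an ellipsoid whose quasi-metric $d$ is anisotropic in the weights $p_{j}$ and whose boundary has weakly pseudoconvex points: the clean ball-kernel computations must be replaced by careful use of the standard-kernel bounds and the doubling property of $(\partial\mathbb{B}^{\textbf{p}},d,\mu_{u})$ established in \cite{thomas}.
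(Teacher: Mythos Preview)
Your proposal is correct and is in the same spirit as the paper, but the paper itself does not actually write out a proof of this theorem: it simply declares it a ``consequence'' of Theorem~3.1 and moves on. Your sufficiency argument (change of variables plus Theorem~3.1) is exactly the intended deduction. For necessity you are more careful than the paper: you correctly observe that boundedness of $C_{\phi}$ only yields the embedding $(\ast)$ for boundary traces of $H^{p}_{u}$ functions, not for arbitrary $f^{*}\in L^{p}_{\mu_{u}}$, so the characteristic-function test in Theorem~3.1 is not literally available. Your remedy via holomorphic peak functions is precisely the device the paper deploys in the neighboring compactness result, Theorem~3.3, where the test family is $f_{n}(z)=(1-\overline{a_{n}}z)^{-4/p}$ with $a_{n}=(1-h_{n})\xi_{n}$; you may find it simpler to reuse those functions and the paper's norm computation there rather than the raw Cauchy--Fantappi\'e kernel $v(w,z)^{-n}$, since the required size and normalization estimates for the ellipsoid are already worked out in that proof. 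The only point to watch is the boundary-value identity $(f\circ\phi)^{*}=Hf^{*}\circ\phi^{*}$ at points where $\phi^{*}(\xi)\in\partial\mathbb{B}^{\textbf{p}}$: the Lindel\"of-type step ``$\phi$ carries admissible approach regions into admissible approach regions'' is not automatic in several variables, and the paper does not supply it either, so if you want a fully rigorous version you should either justify this or bypass it by computing $\|C_{\phi}f\|_{H^{p}_{u}}^{p}$ as a limit over pseudospheres before passing to the boundary.
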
Lastly, we will give necessary and sufficient
conditions for the compactness of composition operators acting on
$H^{p}_{u}(\mathbb{B}^{\textbf{p}})$ :
\begin{theorem}
The composition operator $C_{\phi}(f)=f\circ\phi$ is compact on
$H^{p}_{u}(\mathbb{B}^{\textbf{p}})$ if and only if
$\mu(Q(\xi,\varepsilon))=o(\mu_{u}(B(\xi,\varepsilon)))$ as
$\varepsilon\rightarrow0$ uniformly on
$\xi\in\partial\mathbb{B}^{\textbf{p}}$.
\end{theorem}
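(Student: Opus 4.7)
The plan is to reduce compactness of $C_\phi$ to an approximation criterion and then invoke Theorem 3.1 quantitatively in both directions: for sufficiency, to control the contribution of a boundary layer, and for necessity, by probing the pullback measure with concentrated test functions. Let $\mu(E) = \mu_u(\phi^{-1}(E))$ as in Theorem 3.2 so that, after passing to boundary values via Proposition 2.1, $\|C_\phi f\|_{H^p_u}^p = \int_{\overline{\mathbb{B}^{\textbf{p}}}} |Hf|^p\, d\mu$. The working criterion I would use is the standard one: $C_\phi$ is compact on $H^p_u(\mathbb{B}^{\textbf{p}})$ if and only if every bounded sequence $\{f_k\}$ in $H^p_u(\mathbb{B}^{\textbf{p}})$ that tends to zero uniformly on compact subsets of $\mathbb{B}^{\textbf{p}}$ satisfies $\|C_\phi f_k\|_{H^p_u} \to 0$; this follows from normal families combined with the CF-reproducing formula.

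For the sufficiency direction, fix $\eta > 0$ and use the little-oh hypothesis to choose $\varepsilon_\eta > 0$ with $\mu(Q(\xi,\varepsilon)) < \eta\, \mu_u(B(\xi,\varepsilon))$ for every $\xi \in \partial\mathbb{B}^{\textbf{p}}$ and every $\varepsilon < \varepsilon_\eta$. Set $U_\eta = \{z \in \overline{\mathbb{B}^{\textbf{p}}} : d(z, \partial\mathbb{B}^{\textbf{p}}) < \varepsilon_\eta\}$. The restriction $\mu|_{U_\eta}$ is then a Carleson measure with constant proportional to $\eta$, so Theorem 3.1 applied to $\mu|_{U_\eta}$ yields
\begin{equation*}
\int_{U_\eta} |Hf_k|^p\, d\mu \;\le\; C\eta\, \|f_k^*\|_{L^p(d\mu_u)}^p \;\le\; C\eta\, \|f_k\|_{H^p_u}^p.
\end{equation*}
On the compact remainder $\overline{\mathbb{B}^{\textbf{p}}} \setminus U_\eta$, uniform convergence $f_k \to 0$ drives the corresponding integral to zero. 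Sending first $k \to \infty$ and then $\eta \to 0$ finishes this half.

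For the necessity direction, I would build a family of test functions concentrated near an arbitrary boundary point out of the Cauchy-Fantappie denominator $v(\xi,\cdot)$. For $a \in \mathbb{B}^{\textbf{p}}$, let $\xi_a \in \partial\mathbb{B}^{\textbf{p}}$ be a boundary projection of $a$ and $\varepsilon_a = |u(a)| \to 0$ as $a \to \partial\mathbb{B}^{\textbf{p}}$. I would consider normalized candidates of the form $k_a(z) = \varepsilon_a^{\gamma}/v(\xi_a,z)^{n/p+\gamma}$, with the parameter $\gamma > 0$ tuned so that $\|k_a\|_{H^p_u} \asymp 1$ uniformly, $k_a \to 0$ uniformly on compacta, and $|k_a(z)|^p \gtrsim 1/\mu_u(B(\xi_a,\varepsilon_a))$ on the tent $Q(\xi_a,\varepsilon_a)$. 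Compactness of $C_\phi$ then forces $\|C_\phi k_a\|_{H^p_u} \to 0$; combined with the pointwise lower bound on $Q(\xi_a,\varepsilon_a)$, this translates directly into $\mu(Q(\xi_a,\varepsilon_a))/\mu_u(B(\xi_a,\varepsilon_a)) \to 0$ uniformly in $\xi_a$, which is the little-oh condition.

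The main obstacle will be constructing the test functions $k_a$ and verifying both the two-sided $H^p_u$-norm estimate and the pointwise localization lower bound, since the CF-denominator $v(\xi,z)$ is anisotropic on $\partial\mathbb{B}^{\textbf{p}}$ and $d\mu_u$ carries a non-trivial weight relative to surface measure (cf.~the weight $c(z)$ in Theorem 1.2 and the explicit weight $|\xi_2|^2$ appearing in the proof of Proposition 2.1). Controlling the $L^p(d\mu_u)$-size of the kernels and their concentration on the anisotropic tents $Q(\xi,\varepsilon)$ — rather than on Euclidean balls — is where the main technical work lies, and it relies essentially on the homogeneity of $(\partial\mathbb{B}^{\textbf{p}}, d, \mu_u)$ established by Hansson and already invoked in Section 2.
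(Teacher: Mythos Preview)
Your strategy matches the paper's almost exactly: the sufficiency direction in the paper also splits $\mu$ into a compactly supported piece and a boundary-layer piece and applies Theorem~3.1 to the latter, and the necessity direction likewise tests $\mu$ against peaked normalized functions that tend to zero on compacta. Two small remarks. First, in your sufficiency argument you assert that $\mu|_{U_\eta}$ is Carleson with constant $\sim\eta$; the little-oh hypothesis only gives this for \emph{small} balls, and the paper closes this gap with a short covering argument (decomposing a large $B(\xi,\delta)$ into small ones) that you should include. Second, for necessity the paper bypasses the obstacle you flag by taking the explicit one-parameter family $f_n(z)=(1-\overline{a_n}\cdot z)^{-4/p}$ with $a_n=(1-h_n)\xi_n$ and computing $\|f_n\|_p$ via the slicing formula of Theorem~2.1 together with the change of variables $\xi_j\mapsto\xi_j^{p_j}$; this is cruder than your CF-kernel proposal but avoids the anisotropic norm estimates you anticipated.
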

\begin{proof}
Assume for a contradiction that $\mu(Q(\xi,\varepsilon))\neq
o(\mu_{u}(B(\xi,\varepsilon)))$ so that we can find
$\xi_{n}\in\partial\mathbb{B}^{\textbf{p}}$, positive numbers
$h_{n}$ decreasing to $0$ and $\beta>0$ with
$\mu(Q(\xi_{n},h_{n}))\geq\beta\mu_{u}(B(\xi_{n},h_{n}))$. Set
$a_{n}=(1-h_{n})\xi_{n}$ and define $f_{n}$ so that
$f_{n}(z)=(1-\overline{a_{n}}z)^{-\frac{4}{p}}$ then by the Fubini
type formula that we obtained in the proof of Theorem 2.1 we see
that,
\begin{equation*}
\|f_{n}\|^{p}_{p}=\int_{\partial\mathbb{B}^{\textbf{p}}}|f_{n}|^{p}d\mu_{u}=\int_{A_{0}}\left(\int_{E}|f_{n}|^{p}d\mu_{g}\right)\omega.
\end{equation*}Now for all $\xi\in\partial\mathbb{B}^{\textbf{p}}$,
in the inner integral we make the change of variables
$\gamma(\xi_{n}^{j})=(\xi_{n}^{j})^{p_{j}}$ then if we define
$\tilde{f}(\xi^{p})=f(\xi)$we obtain that
\begin{equation*}
\int_{E}|f_{n}|^{p}d\mu_{g}\sim\int_{0}^{2\pi}|\tilde{f}\mid_{\mathbb{T}}|^{p}d\theta\sim(1-\|a_{n}\|^{2})^{-3n}\sim
h_{n}^{-3n}
\end{equation*} so we have $\|f_{n}\|^{p}_{p}\sim h_{n}^{-3n}$. Thus
if $g_{n}=\displaystyle\frac{f_{n}}{\|f_{n}\|_{p}}\in
H^{p}_{u}(\mathbb{B}^{\textbf{p}})$, we have $g_{n}$ converges to
$0$ weakly since $h_{n}\rightarrow0$ as $n\rightarrow\infty$.
However,
\begin{equation*}
\|g_{n}\circ\phi\|^{p}_{p}=\int_{\partial\mathbb{B}^{\textbf{p}}}|g_{n}\circ\phi|^{p}d\mu_{u}=\int_{\overline{\mathbb{B}^{\textbf{p}}}}|g_{n}|^{p}d\mu
\end{equation*}
\begin{equation*}
\geq\|f_{n}\|^{-p}_{p}\int_{Q(\xi_{n},h_{n})}|f_{n}|^{p}d\mu.
\end{equation*}
If $z\in Q(\xi_{n},h_{n})$, then since $d(z,\xi_{n})\leq h_{n}$
implies $|\xi_{n}-z|\leq h_{n}$ and
\begin{equation*}
|1-\overline{a_{n}}z|=|1-(1-h_{n})\overline{\xi_{n}}z|\leq|\overline{\xi_{n}}(\xi_{n}-z)|+|h_{n}\overline{\xi_{n}}z|\leq2h_{n}
\end{equation*}we have $|f_{n}|^{p}\geq(2h_{n})^{-4}$ on
$Q(\xi_{n},h_{n})$. Thus $\|g_{n}\circ\phi\|^{p}_{p}$ is bounded
away from $0$ and $C_{\phi}$ cannot be compact.\\
For the converse direction assume
$\mu(Q(\xi,\varepsilon))=o(\mu_{u}(B(\xi,\varepsilon)))$ uniformly
in $\xi$. Then given $\varepsilon>0$, there exists a $\delta_{0}>0$
so that for all $\delta<\delta_{0}$ we have that
\begin{equation*}
\mu(Q(\xi,\delta))\leq
2\varepsilon\mu_{u}(B(\xi,\delta))\quad(\ast\ast\ast)
\end{equation*}
Now suppose $\{f_{n}\}$ is a bounded sequence in
$H^{p}_{u}(\mathbb{B}^{\textbf{p}})$ and $f_{n}\rightarrow f$
uniformly on compact subsets of $\mathbb{B}^{\textbf{p}}$. Then,
\begin{equation*}
\int_{\partial\mathbb{B}^{\textbf{p}}}|(f_{n}-f)\circ\phi|^{p}d\mu_{u}=\int_{\overline{\mathbb{B}^{\textbf{p}}}}|f_{n}-f|^{p}d\mu.
\end{equation*}Now decompose $\mu$ so that $\mu=\mu_{1}+\mu_{2}$
where $\mu_{1}$ is the restriction of $\mu$ to
$(1-\delta_{0})\overline{\mathbb{B}^{\textbf{p}}}$ and
$\mu_{2}=\mu-\mu_{1}$. Then,
\begin{equation}\label{eq:decompose}
\int_{\overline{\mathbb{B}^{\textbf{p}}}}|f_{n}-f|^{p}d\mu=\int_{\overline{\mathbb{B}^{\textbf{p}}}}|f_{n}-f|^{p}d\mu_{1}+\int_{\overline{\mathbb{B}^{\textbf{p}}}}|f_{n}-f|^{p}d\mu_{2}.
\end{equation}Since $\mu_{2}<\mu$, $\mu_{2}$ satisfies
$(\ast\ast\ast)$ whenever $\mu$ does. We claim that $\mu_{2}$
satisfies the condition $(\ast\ast)$ in the previous theorem. To see
this claim, note that if $Q(\xi,\eta)\subset
N=\overline{\mathbb{B}^{\textbf{p}}}\setminus(1-\delta_{0})\overline{\mathbb{B}^{\textbf{p}}}$,
the claim is immediate from $(\ast\ast\ast)$. For an arbitrary
$B(\xi,\eta)$ decompose $B(\xi,\eta)$ into a union of open balls
$B(\xi_{j},\delta_{j})$ so that
$\mu_{u}(B(\xi_{j},\delta_{j}))<\delta_{0}$ and
$\sum_{j}\mu_{u}(B(\xi_{j},\delta_{j}))\leq2\mu_{u}B(\xi,\delta)$.
Then $Q(\xi_{j},\delta_{j})\subset N$ and $Q(\xi,\delta)\cap
N=\bigcup_{j}Q(\xi_{j},\delta_{j})$. Hence,
\begin{equation*}
\mu_{2}(Q(\xi,\delta))=\mu(Q(\xi,\delta)\cap
N)\leq\sum_{j}\mu_{u}(Q(\xi_{j},\delta_{j}))
\end{equation*}
\begin{equation*}
\leq2\varepsilon\mu_{u}B(\xi_{j},\delta_{j})\leq4\varepsilon\mu_{u}B(\xi,\delta).
\end{equation*}Therefore,
$\mu_{2}(Q(\xi,\delta))\leq4\varepsilon\mu_{u}B(\xi,\delta)$ for all
$B(\xi,\delta)$ and thus in the equation (\ref{eq:decompose}) the
first integral can be made arbitrarily small by choosing $n$
sufficiently large and for the second integral we have that
\begin{equation*}
\int_{\overline{\mathbb{B}^{\textbf{p}}}}|f_{n}-f|^{p}d\mu_{2}\leq
C\varepsilon\|f_{n}-f\|_{p}
\end{equation*}for some constant $C$ by the previous theorem so
$\varepsilon$ can be chosen arbitrarily small and the result
follows.

\end{proof}

\section* {Acknowledgments}
This paper is based on the second part of my PhD dissertation and I
would like to express my sincere gratitude to my advisor
Prof.Ayd{\i}n Aytuna for his valuable guidance and support. I would
also like to thank Prof. Evgeny A.Poletsky for the valuable
discussions that we had during my visit at Syracuse University.

\vspace*{20pt}


\begin{thebibliography}{99}
\bibitem{aladro} Gerardo Aladro, \textit{The Comparability of the Kobayashi Approach Region and the Admissible Approach
Region}, Illinois Journal of Mathematics, Vol: 33, No:1, (1989).


\bibitem{de1} Jean-Pierre Demailly, \textit{Mesures de Monge-Amp\`ere
et Caract\'erisation G\'eom\'etrique des Vari\'et\'es Alg\'ebraiques
Affines}, M\'emoire de la Soci\'et\'e Math\'ematique de France
\textbf{19},1-124, (1985).
\bibitem{de2} Jean-Pierre Demailly, \textit{Mesures de
Monge-Amp\`ere et Mesures Pluriharmoniques}, Matematische
Zeitschrift, No:194, 519-564, (1987).
\bibitem{de3} Jean-Pierre Demailly, \textit{Complex Analytic and
Differential Geometry} , unpublished manuscript.



\bibitem{thomas} Thomas Hansson, \textit{On Hardy Spaces in Complex Ellipsoids
}, Annales de l'institut Fourier 49, 1477-1501, (1999).

\bibitem{krantz} Steven G. Krantz, \textit{Fatou Theorems Old and New:
An Overview of The Boundary Behavior of Holomorphic Functions}





\bibitem{pol} Evgeny A. Poletsky,Michael I. Stessin, \textit{Hardy
and Bergman Spaces on Hyperconvex Domains and Their Composition
Operators} Indiana Univ. Math. J. \textbf{57}, 2153-2201, (2008).



\bibitem{range} R. Michael Range, \textit{Holomorphic Functions and Integral Representations in Several Complex
Variables}, Springer-Verlag New York Inc., (1986).



\bibitem{sah} Sibel \c{S}ahin, \textit{Monge-Amp\`ere Measures and Poletsky-Stessin Hardy
Spaces on Bounded Hyperconvex Domains}, PhD Dissertation, Sabanc{\i}
University, 2014.

\bibitem{stein} Elias M. Stein, \textit{The Boundary Behavior of
Holomorphic Functions of Several Complex Variables}, Princeton
University Press, Princeton, (1972).
\bibitem{stout} Edgar Lee Stout, \textit{The Boundary Values of Holomorphic Functions of Several Complex
Variables}, Duke Mathematical Journal, volume 44, no:1, 105-108,
(1977).
\bibitem{zorn} Paul Zorn, \textit{Analytic Functionals and The Bergman Projection on Circular
Domains}, Proceedings of the American Mathematical Society, Vol:96,
No:3, (1986).

\bibitem{zyg} A. Zygmund, \textit{Trigonometric Series}, Cambridge
University Press, Third Edition Volumes I-II Combined , (2002).




\end{thebibliography}
\end{document}